\newtheorem{theorem}{Theorem}
\newtheorem*{theorem*}{Theorem}
\newtheorem{proposition}[theorem]{Proposition}
\newtheorem*{proposition*}{Proposition}
\newtheorem{lemma}[theorem]{Lemma}
\newtheorem*{lemma*}{Lemma}
\newtheorem{definition}[theorem]{Definition}
\newtheorem{example}[theorem]{Example}
\newtheorem*{example*}{Example}
\newtheorem*{definition*}{Definition}
\newtheorem*{cor*}{Corollary}
\newtheorem*{result*}{Result}
\newtheorem*{hypotheses*}{Hypotheses}
\def\Hess{\operatorname{Hess}}
\newcommand{\eps}{{\varepsilon}}
\numberwithin{equation}{section}
\newcommand{\note}[1]{\textbf{\textcolor{blue}{#1}} \\ \\}
\begin{document}
\title[K\"ahler metrics on tube domains]{Positively curved K\"ahler metrics on tube domains and their applications to optimal transport}

\author{Gabriel Khan}\thanks{The research of GK is partially supported by Simon's Collaboration Grant 849022. }
\address{Gabriel Khan. Department of Mathematics, Iowa State University, Ames, IA, 50011}
\email{gkhan@iastate.edu}

\author{Jun Zhang}\thanks{J.Z. was previously supported by a subcontract through UCLA from AFOSR grant FA9550-19-1-0213."}
\address{Jun Zhang. Department of Mathematics, University of Michigan, Ann Arbor, MI 92093, USA}
\email{junz@umich.edu}

\author{Fangyang Zheng}\address{Fangyang Zheng. School of Mathematical Sciences, Chongqing Normal University, Chongqing 401331, China}
\email{20190045@cqnu.edu.cn; franciszheng@yahoo.com} \thanks{F.Z. is partially supported by National Natural Science Foundations of China
with the grant No.12141101 and 12071050, Chongqing grant cstc2021ycjh-bgzxm0139, and is supported by the 111 Project D21024.}


\begin{abstract} In this article, we study a class of K\"ahler manifolds defined on tube domains in $\mathbb{C}^n$, and in particular those which have $O(n) \times \mathbb{R}^n$ symmetry. For these, we prove a uniqueness result showing that any such manifold which is complete and has non-negative orthogonal bisectional curvature ($n \geq 3$) or non-negative bisectional curvature ($n \geq 2$) is biholomorphically isometric to $\mathbb{C}^n$. 

We also consider another curvature tensor called the \emph{orthogonal anti-bisectional curvature}. We find necessary and sufficient conditions for a complete $O(n)$-symmetric tube domain to have non-negative orthogonal anti-bisectional curvature and provide several examples of complete metrics which satisfy this condition. Finally, we discuss some applications of these spaces within optimal transport. In particular, we study ``synthetic" curvature bounds for non-smooth geometries and how they can be applied to the rough geometry induced by the Monge cost $c(x,y)=\|x-y\|$.

\end{abstract}

\maketitle

\section{Introduction}

 In this paper, we study K\"ahler metrics defined on tube domains, which are domains of the form \[ T \Omega = \left \{ z \in \mathbb{C}^n ~|~ z = x+iy, x \in \Omega \right \} \] where $\Omega$ is a convex domain in $\mathbb{R}^n$.
In particular, we focus on metrics $\omega_\Psi$ whose K\"ahler potential $\Psi$ (in the $z$-coordinates) is independent of $y$, and so admit a natural translation symmetry. 
Our primary focus in this paper are K\"ahler metrics which are complete and whose curvature is non-negative (in several different senses). In general, positive curvature on K\"ahler manifolds is a very strong assumption which greatly restricts the geometry and topology. On the other hand, positively curved metrics have very interesting geometric properties and in many geometric or analytic applications, an assumption of positive curvature is necessary to establish results. Our results fall in line with both these expectations, showing that positive curvature is quite restrictive but also has interesting applications.

Before discussing our results, it is worth noting that the tube domain $T \Omega$ has a natural interpretation as the tangent bundle of the domain $\Omega$. From this perspective, the $\mathbb{R}^n$-symmetric K\"ahler metric on $T \Omega$ coincides with the so-called \textit{Sasaki metric}\footnote{More generally, the Sasaki metric is an almost-Hermitian metric defined on the tangent bundle of a Riemannian manifold $(M,g)$ with an affine connection $\nabla$. In general, this structure need not be complex or symplectic, but we will only consider cases where it is K\"ahler.} \cite{Dombrowski}. To distinguish this class of spaces from more general K\"ahler manifolds, we will refer to translation-symmetric K\"ahler metrics on tube domains as \textit{K\"ahler Sasaki metrics}.

\subsection{Positively curved $O(n)$-symmetric metrics}

In the first several sections of this paper, we focus on K\"ahler Sasaki metrics which are complete and $O(n)$-symmetric. In other words, we take $\Omega$ to be the ball $B_a\subseteq {\mathbb R}^n$ of radius $a$ (with $a$ possibly infinite) and suppose that rotations of $\Omega$ preserve the K\"ahler metric on $T \Omega$.
In Section \ref{Bisectional}, we study the bisectional and orthogonal bisectional curvatures of these spaces. We find the following uniqueness theorem.

\begin{theorem*} \label{Main}
	Let $\Psi$ be a strictly convex function on the ball $B_a\subseteq {\mathbb R}^n$ of radius $0<a\leq \infty$. Assume that $\Psi(x)$ depends solely on the norm of $x$ (i.e. the metric is rotationally symmetric) and that the associated K\"ahler metric $\mathfrak{h}_{\Psi}$ on $T B_a$ is complete. If either $n= 2$ and the bisectional curvature is everywhere non-negative, or $n\geq 3$ and the orthogonal bisectional curvature is everywhere non-negative, then $a=\infty $ and the metric $\mathfrak{h}_{\Psi}$ is the flat Euclidean metric on ${\mathbb C}^n$.
\end{theorem*}

Put more simply, this shows that in dimension 2 the only K\"ahler Sasaki metrics which is complete, rotationally symmetric, and whose bisectional curvature is non-negative is the standard flat metric on $\mathbb{C}^n$. Furthermore, we have the same conclusion when $n\geq 3$ under the weaker assumption that the orthogonal bisectional curvature is non-negative.

Since this result rules out any (non-trivial) examples with positive bisectional curvature, we turn our attention to other notions of curvature. In Section \ref{antibisectionalsection}, we consider the \textit{anti-bisectional curvature}, defined as follows:
\[ \mathfrak{A} (U,V) = 4 R^h (U, \overline{V}, U, \overline{V}) \]
for two holomorphic vector fields $U$ and $V$.
For K\"ahler Sasaki metrics, there is a notion of ``non-negative anti-bisectional curvature" (abbreviated (NAB)) and  ``non-negative orthogonal anti-bisectional curvature" (abbreviated (NOAB)), which is defined precisely in Section \ref{antibisectionalsection}.\footnote{As a note of caution, (NAB) (respectively (NOAB)) are weaker assumptions than assuming $\mathfrak{A} \geq 0$ for all (respectively all orthogonal) type (1,0)-vectors.}
For $O(n)$-symmetric tube domains, we find several integral-differential inequalities which are equivalent to (NOAB) (see Proposition \ref{NOABforOnsymmetric} for a precise statement). Using this, we find several new examples of such metrics. As a result, (NOAB) does not imply the same type of uniqueness properties as with bisectional curvature.

Following the theme that positively curved metrics have desirable analytic properties, 
recent work by the first two named authors \cite{KhanZhang} found a connection between orthogonal anti-bisectional curvature and optimal transport. More precisely, given a (NOAB) metric on a tube domain, it is possible to construct a cost function $c:X \times Y \to \mathbb{R}$ (with $X,Y \subset \mathbb{R}^n$) satisfying the \textit{(MTW) condition}, which plays a crucial role in the regularity theory of optimal transport
(see Section \ref{Regularityoftransport} for more details).

As such, we can use our examples of (NOAB) K\"ahler Sasaki metrics to find costs whose optimal transport has good regularity properties. For instance, we provide the following example.

\begin{example*}
The cost function $c:\mathbb{R}^n \times \mathbb{R}^n \to \mathbb{R}$ given by $c(x,y)= \|x-y\| - C \log(\|x-y\|)+C $ for ($C>0$) satisfies (MTW).
\end{example*}

For K\"ahler Sasaki metrics which are $O(n)$-symmetric, the induced cost function depends only on the Euclidean distance between $x$ and $y$. As such, by finding $O(n)$-symmetric metrics with (NOAB), we can find cost functions which have a natural geometric interpretation. For $c$ small, this example gives a cost which approximates Monge's original cost function $c(x,y) = \|x-y\|$ but which has better regularity properties.\footnote{The cost $c(x,y) = \sqrt{\epsilon+\|x-y\|^2}$ also satisfies the $MTW$ condition and approximates Monge's cost, but corresponds to a metric which is \emph{incomplete}. }

\subsection{Synthetic curvature of K\"ahler Sasaki metrics}

Having seen how the study of tube domains has applications to optimal transport, we then use ideas from optimal transport to better understand the geometry of K\"ahler Sasaki metrics. In particular, we study \textit{synthetic} curvature bounds when the K\"ahler potential is not $C^4$, (and thus the usual curvature expressions are not well defined). By analogy, these results are similar to the CAT($\kappa$)-inequality, which provides a synthetic version of lower bounds on the sectional curvature.
 Using the connection between (NOAB) and optimal transport, we provide synthetic characterizations of non-negative orthogonal anti-bisectional curvature and non-negative anti-bisectional curvature. 
 
 At first, $O(n)$-symmetric K\"ahler metrics and synthetic curvature conditions might seem to be distinct topics. However, there is a problem in optimal transport which connects the two, which is to establish a regularity theory for optimal transport with respect to the cost $c(x,y)=\|x-y\|$. This was the cost function originally considered by Monge, but it does not satisfy the assumptions of the standard regularity theory of optimal transport. There are some results which suggest that a weaker regularity theory holds for this cost, but the behavior of the solutions are not well understood. One possible route to study this problem is to try to apply techniques from complex geometry (e.g., K\"ahler-Ricci flow) to regularize the cost function and then try to establish regularity for the limiting solutions.

As the Monge potential is singular, we cannot solve the K\"ahler-Ricci flow on the associated tube domain in the classical sense. Instead, we use a formal computation to derive a family of potentials
   \[ \Psi_{Monge}(r,t)= r- 3t\log(r), \]
which can be understood as first-order Taylor expansion for renormalized Ricci flow. This gives a canonical deformation of Monge's potential, which may be useful to study the regularity theory of the associated transport.

\subsection{Organization of the paper}

 Section \ref{Background} provides background information on Hessian manifolds and K¨ahler metrics on tube
domains. In Section \ref{Bisectional}, we discuss the bisectional and
orthogonal bisectional curvatures of $O(n)$-symmetric tube domains. In Section \ref{antibisectionalsection}, we study the anti-bisectional curvature of rotational symmetric tube domains. Section \ref{Regularityoftransport} discusses the relationship between orthogonal anti-bisectional curvature and optimal transport. Section \ref{SyntheticCurvature} studies curvature bounds on tube domains when the K\"ahler potential is not $C^4$. In particular, in Subsection \ref{Monge cost and KR flow} we study the regularity problem of optimal transport with respect to the Monge cost and how ideas from K\"ahler-Ricci flow can be used to study this problem. Finally, there is an Appendix at the end of the paper, which verifies that a particular family of metrics satisfies (NOAB). 

\section{Background on Hessian metrics and K\"ahler metrics on tube domains} \label{Background}

In this section, we provide some background on Hessian manifolds and the K\"ahler metrics on their tube domains. A Riemannian manifold $(\Omega, g)$ is said to be a Hessian manifold if
\begin{enumerate}
    \item $\Omega$ admits a flat connection $D$,
    \item  such that around every point $x \in \Omega$, there is an open neighborbood $U_x \subset \Omega$,
    \item and a function $\Psi: U_x \to \mathbb{R}$, such that
    \[ g = D^2 \, \Psi. \]
\end{enumerate}

For convex domains in Euclidean space, we can construct Hessian metrics at will, simply by choosing a convex function and using the connection induced by differentiation in coordinates.\footnote{Since Hessian manifolds must be affine (i.e. admit a flat connection), it is non-trivial to find compact examples which are not tori.}
More precisely, we consider a convex domain $\Omega \subseteq {\mathbb R}^n$ with standard coordinates $x=(x_1, \ldots , x_n)$ and a strongly convex function $\Psi:\Omega \to \mathbb{R}$. For now, we will assume that $\Psi$ is $C^4$, but in Section \ref{SyntheticCurvature} we consider less smooth potentials.
Using this potential, we define a Riemannian metric $g$ on $\Omega$, which is given (in $x$-coordinates) by
 $$ g = g_{\Psi} := \sum_{i,j=1}^n \Psi_{ij} dx_i \otimes dx_j. $$
 We call this the Hessian metric with potential $\Psi$. Here and below we denote by $\Psi_{ij}$ the partial derivative $\frac{\partial^2 \Psi}{\partial x_i \partial x_j}$.
To define an associated K\"ahler manifold, we first define the tube domain $T \Omega$. To do so, we write $z_i = x_i + \sqrt{-1} y_i$ as the standard holomorphic coordinates of the complex Euclidean space ${\mathbb C}^n$, and consider the domain $ T \Omega \subseteq {\mathbb C}^n$ given by
$$ T \Omega = \{ (x,y) \mid x \in \Omega , y \in {\mathbb R}^n \} = \Omega \times {\mathbb R}^n \subseteq {\mathbb C}^n.$$
The $(1,1)$-form
$$ \omega_{\Psi} = \sqrt{-1} \sum_{i,j=1}^n \Psi_{ij}(x) dz_i \wedge d\overline{z_j} $$
defined on $T \Omega$ is positive and closed, hence gives a K\"ahler metric on $T \Omega$. We will call $h_{\Psi}$ the {\em K\"ahler Sasaki metric} with potential $\Psi$. As mentioned previously, the reason for this terminology is that it coincides with the almost-Hermitian structure on the tangent bundle of a Riemannian manifold with an affine connection, better known as the \textit{Sasaki metric} \cite{Dombrowski}. Here, the connection $\nabla$ is suppressed, but corresponds to differentiation with respect to the $x$-coordinates (i.e. in the x-coordinates, the Christoffel symbols of $\nabla$ vanish). For simplicity, we will often denote the metric by $\mathfrak{h}$ and  K\"ahler form by $\omega$ (dropping the subscripts from each).

\subsection{Completeness and Curvature}

We are primarily interested in \textit{complete} metrics, so it is necessary to find a condition which ensures that the K\"ahler Sasaki metric is complete. To this end, we make the following observation, which was originally proved by Molitor \cite{Molitor}.

\begin{lemma}
The K\"ahler manifold $(T \Omega, \mathfrak{h}_{\Psi})$ is complete if and only if the Riemannian manifold $(\Omega , g_{\Psi})$ is complete.
\end{lemma}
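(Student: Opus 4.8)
The plan is to compare the Riemannian distances on $T\Omega$ and $\Omega$ directly. In the real coordinates $(x_1,\dots,x_n,y_1,\dots,y_n)$ on $T\Omega=\Omega\times\mathbb{R}^n$, the metric underlying $\omega_\Psi$ is, up to a positive constant, $\mathfrak{h}_\Psi=\sum_{i,j}\Psi_{ij}(x)\,(dx_i\otimes dx_j+dy_i\otimes dy_j)$; it has no $dx\,dy$ cross terms, it restricts to $g_\Psi$ on the horizontal subspace $\mathrm{span}\{\partial_{x_i}\}$, and to the flat ($x$-dependent) metric $\sum_{i,j}\Psi_{ij}(x)\,dy_i\otimes dy_j$ on the vertical subspace $\mathrm{span}\{\partial_{y_i}\}$. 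Two elementary length estimates follow. First, since $(\Psi_{ij})$ is positive definite, any curve $\gamma(t)=(x(t),y(t))$ satisfies $|\dot\gamma|^2_{\mathfrak{h}_\Psi}=\sum\Psi_{ij}(x)(\dot x_i\dot x_j+\dot y_i\dot y_j)\ge|\dot x|^2_{g_\Psi}$, so the projection $\pi\colon(T\Omega,\mathfrak{h}_\Psi)\to(\Omega,g_\Psi)$, $\pi(x,y)=x$, is distance non-increasing: $d_{g_\Psi}(x_0,x_1)\le d_{\mathfrak{h}_\Psi}((x_0,y_0),(x_1,y_1))$ for all $y_0,y_1$. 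Second, lifting a curve $x(t)$ in $\Omega$ to $\tilde\gamma(t)=(x(t),y_0)$ gives $|\dot{\tilde\gamma}|_{\mathfrak{h}_\Psi}=|\dot x|_{g_\Psi}$, whence $d_{\mathfrak{h}_\Psi}((x_0,y_0),(x_1,y_0))\le d_{g_\Psi}(x_0,x_1)$.

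For the direction ``$(T\Omega,\mathfrak{h}_\Psi)$ complete $\Rightarrow(\Omega,g_\Psi)$ complete'', let $(x_k)$ be $g_\Psi$-Cauchy. By the second estimate the points $(x_k,0)$ form an $\mathfrak{h}_\Psi$-Cauchy sequence, which converges to some $(x_\infty,y_\infty)\in T\Omega$ by completeness; by the first estimate $d_{g_\Psi}(x_k,x_\infty)\le d_{\mathfrak{h}_\Psi}((x_k,0),(x_\infty,y_\infty))\to0$, so $x_k\to x_\infty$ in $(\Omega,g_\Psi)$. (Alternatively, a short Christoffel-symbol computation shows the zero section $\Omega\times\{0\}$ is a totally geodesic submanifold of $T\Omega$ isometric to $(\Omega,g_\Psi)$, and a totally geodesic submanifold of a complete manifold is complete.)

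The substantive direction is ``$(\Omega,g_\Psi)$ complete $\Rightarrow(T\Omega,\mathfrak{h}_\Psi)$ complete'', which I would verify via Hopf--Rinow by showing every closed $\mathfrak{h}_\Psi$-ball is compact. Fix $p_0=(x_0,0)$ and $R>0$. If $(x,y)\in\bar B_{\mathfrak{h}_\Psi}(p_0,R)$, then $x\in K:=\bar B_{g_\Psi}(x_0,R)$ by the first estimate, and $K$ is a compact subset of $\Omega$ by Hopf--Rinow applied to $(\Omega,g_\Psi)$. To bound the fiber coordinate, choose a curve $\gamma=(x(t),y(t))$ from $p_0$ to $(x,y)$ with $\mathfrak{h}_\Psi$-length $\le R+1$; its projection $x(t)$ has $g_\Psi$-length $\le R+1$, hence stays in the compact set $K':=\bar B_{g_\Psi}(x_0,R+1)\subset\Omega$, on which the smallest eigenvalue of $(\Psi_{ij})$ is bounded below by some $\lambda_0>0$. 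Then $|\dot y|^2_{\mathrm{eucl}}\le\lambda_0^{-1}\sum\Psi_{ij}(x(t))\dot y_i\dot y_j\le\lambda_0^{-1}|\dot\gamma|^2_{\mathfrak{h}_\Psi}$, so $|y|_{\mathrm{eucl}}\le\lambda_0^{-1/2}(R+1)$. Therefore $\bar B_{\mathfrak{h}_\Psi}(p_0,R)$ is a closed subset of the compact set $K\times\{\,|y|_{\mathrm{eucl}}\le\lambda_0^{-1/2}(R+1)\,\}$, hence compact, and completeness of $\mathfrak{h}_\Psi$ follows. The one genuinely delicate point---and the step I expect to be the crux---is exactly this fiber estimate: the vertical part $\sum\Psi_{ij}(x)\,dy_i\,dy_j$ of $\mathfrak{h}_\Psi$ degenerates as $x\to\partial\Omega$, so one must rule out finite-length escape in the $y$-directions; Hopf--Rinow on the base does precisely this by confining the horizontal projection of any finite-length curve to a fixed compact subset of $\Omega$ on which $(\Psi_{ij})$ is uniformly elliptic.
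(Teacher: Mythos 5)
Your proposal is correct and rests on the same two estimates as the paper's proof: the projection $(x,y)\mapsto x$ is length non-increasing (so $|\dot\gamma|_{\mathfrak{h}_\Psi}\ge|\dot x|_{g_\Psi}$), and $(\Psi_{ij})$ is uniformly elliptic on compact subsets of $\Omega$, which rules out finite-length escape in the fiber directions. The differences are only in packaging: the paper argues the ``if'' direction by showing every divergent curve has infinite length (splitting into the cases where $x(s)$ leaves every compact set or stays in one) rather than via Hopf--Rinow and compactness of closed balls, and it treats the converse direction, which you prove by the Cauchy-sequence/zero-section argument, as immediate.
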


\begin{proof}
It suffices to prove the `if' direction. Let $\gamma (s)= (x(s), y(s))$ be a smooth curve in $T \Omega=\Omega \times {\mathbb R}^n$ going to infinity. We have
$$ |\dot \gamma(s)|^2_{\mathfrak{h}} = \sum_{i,j=1}^n \left(\dot x_i \dot x_j \Psi_{ij}(x) + \dot y_i \dot y_j \Psi_{ij}(x) \right) . $$
Clearly, $|\dot \gamma(s)|_\mathfrak{h} \geq |x'(s)|_g$, so if the curve $x(s)$ goes to the boundary of $\Omega$, then $x(s)$, hence $\gamma(s)$, is of infinite length as $g$ is complete.  If, on the other hand, the curve $x(s)$ is contained in a compact subset $K$ of $\Omega$, then $y(s)$ must tend to infinity. Take $\eps > 0$ such that $g\geq \eps\, g_0$ on $K$, where $g_0$ is the Euclidean metric of ${\mathbb R}^n$, we have $| \dot \gamma(s) |_\mathfrak{h} \geq |\dot y(s)|_\mathfrak{h} \geq \eps\, |\dot y(s)|_{g_0}$. Hence the length of $\gamma (s)$ is again infinite. This completes the proof of the lemma.
\end{proof}

We now turn our attention to the curvature of K\"ahler Sasaki metrics.
 For brevity, we will not derive the full expressions for the curvature, which can be found in full detail in Satoh \cite{Satoh}. Following the convention of \cite{KhanZhang}, we denote $\Psi_{ijk}=\frac{\partial^3 \Psi}{\partial x_i\partial x_j \partial x_k }$, etc., and use $\Psi^{ij}$ to denote the elements of the matrix inverse to $\Psi_{ij}$. For the K\"ahler manifold $(T \Omega, \mathfrak{h})$, under the natural frame of the holomorphic coordinate $z$, we have $\frac{\partial} {\partial z_i} = \frac{1}{2}( \frac{\partial} {\partial x_i} - \frac{\partial} {\partial y_i} )$, so we have
$$ \mathfrak{h}_{i\overline{j}} = \Psi_{ij}, \ \ \ \ \mathfrak{h}_{i\overline{j},k} = \frac{1}{2} \Psi_{ijk}, \ \  \ \ \mathfrak{h}_{i\overline{j},k\overline{\ell}} = \frac{1}{4} \Psi_{ijk\ell }, \ \ \ \ \mathfrak{h}^{\overline{i}j} = \Psi^{ij}. $$

The components of the curvature tensor $R^{(\mathfrak{h})}$ of $\mathfrak{h}$ are given by
\begin{equation}
R^{(\mathfrak{h})}_{i\overline{j}k\overline{\ell}} = - \frac{1}{4}\Psi_{ijk\ell} + \sum_{p,q} \frac{1}{4} \Psi_{ipk}\, \Psi_{jq\ell} \,\Psi^{pq}
\end{equation}
In particular, for tangent vectors $u=\sum u_i\frac{\partial} {\partial z_i}$ and $v=\sum v_i\frac{\partial} {\partial z_i}$, the bisectional curvature of $h$ is given by
\begin{equation}
R^{(\mathfrak{h})}_{u\overline{u}v\overline{v}} = \sum_{p,q} - \frac{1}{4}\Psi_{u\overline{u}v\overline{v}} + \frac{1}{4} \Psi_{uvp}\, \overline{\Psi_{uvq}} \,\Psi^{pq}.
\end{equation}
Here we denoted by $\Psi_{uvp}= \sum_{i,j}  u_i v_j \Psi_{ijp}$, etc. for the sake of simplicity.

\subsection{$O(n)$-symmetric Hessian metrics}

We now specialize our attention to the case when $\Psi$ is \textit{rotationally symmetric}.
That is to say, we take $\Omega$ to be the ball $B_a\subseteq {\mathbb R}^n$ of radius $a$ ($0 < a \leq \infty$) centered at the origin. Furthermore, we suppose that $\Psi (x) = \phi(r)$, where $r=|x|=\sqrt{x_1^2+ \cdots + x_n^2}$ and $\phi$ is a convex function.
 
 We will write $f(r) = \frac{1}{r} \phi'(r)$, so $f\in C^{\infty }[0, a )$ and
$$ \Psi_{ij} = f \delta_{ij} + \frac{f'}{r}x_ix_j .$$
For convenience, we write $h = f + rf'$. It is straightforward to see that $g_{\Psi}$ is a metric if and only if both $f$ and $h$ are positive on $[0, a)$, as the eigenvalues of the matrix $(\Psi_{ij})$ are $h$ and $(n-1)$ copies of $f$.

 For such metrics, we provide an alternate characterization of completeness.

\begin{lemma} \label{completeness}
A rotationally symmetric Hessian metric $g_{\Psi}$  is complete if and only if $$ \int_{0}^a \sqrt{h} \,dr = \infty .$$
\end{lemma}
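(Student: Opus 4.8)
The plan is to introduce geodesic-type polar coordinates, in which the metric becomes an explicit warped product, and then to reduce completeness to the finiteness of a single radial integral.

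First I would rewrite $g_\Psi$ in the coordinates $(r,\theta)$ with $x = r\theta$, $\theta \in S^{n-1}$. Using $\Psi_{ij} = f\delta_{ij} + \frac{f'}{r}x_ix_j$ together with $\sum_i dx_i\otimes dx_i = dr^2 + r^2 g_{S^{n-1}}$ and $\sum_i x_i\,dx_i = r\,dr$, a short computation gives
\[ g_\Psi = h(r)\,dr^2 + r^2 f(r)\,g_{S^{n-1}}, \qquad h = f + rf'. \]
In particular the $g_\Psi$-length of the radial segment from the origin out to Euclidean radius $\rho$ is exactly $F(\rho) := \int_0^\rho \sqrt{h(r)}\,dr$, and $F$ equals the $g_\Psi$-distance from the origin to the sphere $\{|x|=\rho\}$.

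For the ``only if'' direction I would argue by contraposition. If $\int_0^a \sqrt h\,dr < \infty$, then for any fixed $\theta_0 \in S^{n-1}$ the radial ray $r \mapsto r\theta_0$, $r \in [0,a)$, is a curve of finite $g_\Psi$-length that eventually leaves every compact subset of $B_a$: it approaches the boundary sphere when $a < \infty$, and has Euclidean norm tending to $\infty$ when $a = \infty$ (recall that every compact subset of $B_a$ is contained in some $\{|x| \le \rho\}$ with $\rho < a$). A finite-length divergent curve yields a Cauchy sequence with no limit, so $(B_a,g_\Psi)$ is not complete; hence completeness forces $\int_0^a \sqrt h\,dr = \infty$. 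For the ``if'' direction I would use the standard criterion that a Riemannian manifold is complete exactly when every divergent curve has infinite length. Let $\gamma(s) = (r(s),\theta(s))$, $s \in [0,\ell)$, eventually leave every compact subset of $B_a$; since $\{|x|\le\rho\}$ is compact in $B_a$ for each $\rho < a$, this forces $r(s) \to a$. From the warped-product form,
\[ \mathrm{Length}(\gamma) \ge \int_0^\ell \sqrt{h(r(s))}\,|\dot r(s)|\,ds = \int_0^\ell \Bigl|\tfrac{d}{ds}F(r(s))\Bigr|\,ds \ge \bigl|F(r(s_2)) - F(r(s_1))\bigr| \]
for any $0 \le s_1 < s_2 < \ell$; letting $s_2 \to \ell$ and using $r(s_2)\to a$ with $F(a^-) = \int_0^a \sqrt h\,dr = \infty$ gives $\mathrm{Length}(\gamma) = \infty$, as required.

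The computation producing the warped-product form is routine, and both completeness implications are then short. The only point needing a little care is the topological bookkeeping in the divergence argument — checking that ``leaves every compact set'' genuinely forces $r(s)\to a$ in both the finite-$a$ and the $a=\infty$ regimes, and invoking the correct form of the completeness criterion (or, equivalently, simply noting that the radial ray provides an explicit inextendible finite-length geodesic in the incomplete case). I do not anticipate a substantive obstacle beyond that.
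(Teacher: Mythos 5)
Your proposal is correct and follows essentially the same route as the paper: the radial segment shows completeness forces $\int_0^a\sqrt{h}\,dr=\infty$, and the lower bound $|\dot\gamma|_g\ge\sqrt{h}\,|\dot r|$ (which the paper obtains via Cauchy--Schwarz in Cartesian coordinates rather than by writing out the warped-product form) shows any curve approaching the boundary has infinite length. The polar-coordinate presentation and the explicit appeal to the divergent-curve criterion are only cosmetic differences from the paper's argument.
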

\begin{proof}
Assume that $g_\Psi$ is complete. Consider $\gamma (r) = (r, 0 , \ldots , 0)$, $0\leq r<a$. We have $\gamma'(r) =\frac{\partial}{\partial x_1}$, so $|\gamma'(r)|^2_{g} = \Psi_{11} = f+ rf'=h$. Therefore the length of $\gamma$ is equal to $\int_{0}^a\sqrt{h} \,dr$, which must be infinite for the metric to be complete.

Conversely, assume that this integral is infinite. Let $\gamma (s)=(x_1(s), \ldots , x_n(s))$, $0\leq s<b$ (where $0<b\leq \infty$), be a smooth curve in $B_a$ approaching the boundary. We may assume that $s$ is the arc-length parameter in the Euclidean metric $g_0$, namely, $x_1'^2 + \cdots + x_n'^2 =1$. We have
$$ |\gamma'(s)|^2_g = \sum_{i,j=1}^n x_i' x_j' \Psi_{ij} = f\sum_{i=1}^2 x_i'^2 + \frac{f'}{r} \left(\sum_{i=1}^n x_i x_i'\right)^2 = f + \frac{f'}{r} (rr')^2. $$
By the Cauchy-Schwartz inequality, we have
$$ r^2r'^2 = \left(\sum x_ix_i'\right)^2 \leq \sum x_i^2 \cdot \sum x_i'^2 = r^2\cdot 1,$$
so $r'^2\leq 1$, and
$$|\gamma '|^2_g =f + f'rr'^2 \geq f r'^2 + f'r r'^2 = h r'^2, $$
thus $|\gamma '|_g \geq \sqrt{h} \,r'$, and we have
$$ \int_0^b |\gamma'(s)|_g ds \geq \int_0^b \sqrt{h} r' ds = \int_0^a \sqrt{h} \,dr = \infty. $$
This proves that $g_{\Psi}$ is complete.
\end{proof}

\section{$O(n)$-symmetric K\"ahler Sasaki metrics with non-negative bisectional curvature} 
\label{Bisectional}

In this section, we consider $O(n)$-symmetric K\"ahler metrics with non-negative bisectional curvature. For these metrics, we can simplify the curvature formulas by computing several further derivatives of the rotationally symmetric potential function $\Psi (x) = \phi(r)$. 
In particular, we find that
\begin{equation}
 \Psi_i =\frac{P'}{r}x_i = fx_i, \ \ \ \ \Psi_{ij} = f\delta_{ij} + \frac{f'}{r}x_ix_j, \ \ \ \ \Psi^{ij} = \frac{1}{f} \left( \delta_{ij} - \frac{f'}{rh} x_ix_j \right) .
 \end{equation}
For the third and fourth derivatives, we have
\begin{eqnarray}
 \Psi_{ijk} & = & \dot{f} \left( \delta_{ij} x_k + \delta_{jk} x_i  + \delta_{ik} x_j  \right)  + \ddot{f} \,x_ix_jx_k  \\
 \Psi_{ijk\ell } & = & \dot{f} \left( \delta_{ij}\delta_{k\ell } + \delta_{jk}\delta_{i\ell }  + \delta_{ik}\delta_{j\ell } \right) + \dddot{f} \,x_ix_jx_k x_{\ell} \,+ \nonumber \\
 & &  + \,\ddot{f} \left( \delta_{ij} x_k x_{\ell } + \delta_{ik} x_j x_{\ell }  + \delta_{i\ell} x_j x_{k } + \delta_{jk} x_i x_{\ell } + \delta_{j\ell} x_i x_{k} + \delta_{k\ell} x_i x_j \right)
\end{eqnarray}
where we have used the notation
$$ \dot{f}=\frac{1}{r}f', \ \ \ \ddot{f} = \frac{1}{r}\left( \frac{1}{r}f' \right)', \ \ \ \dddot{f} = \frac{1}{r} \left(\frac{1}{r} \left(\frac{1}{r}f' \right)' \right)' .$$

Now suppose that $u=(u_1, \ldots , u_n)$ and $v=(v_1, \ldots , v_n)$ are unit vectors in ${\mathbb C}^n$ (in Euclidean norm). We denote
$$ \alpha_u = \left \langle u, \frac{x}{r} \right \rangle = \sum \frac{x_iu_i}{r}, \ \ \ \alpha_v = \left \langle v, \frac{x}{r} \right \rangle, \ \ \ \beta =  \left \langle u,v \right \rangle, \ \ \ \lambda = \left \langle u,\overline{v} \right \rangle.  $$

Using this notation, we find that
\begin{eqnarray}
 \Psi_{uvp} & = & \dot{f} \left( \beta x_p + r\alpha_uv_p + r\alpha_v u_p   \right)  + r^2 \ddot{f} \,\alpha_u\alpha_vx_p  \\
 \Psi_{uvx} & = & \sum_{p=1}^n \Psi_{uvp}\,x_p \ = \ r^2\dot{f} \left( \beta  + 2\alpha_u \alpha_v \right)  + r^4 \ddot{f} \,\alpha_u\alpha_v  \\
 \Psi_{u\overline{u}v\overline{v}} & = & \dot{f} \left( 1+|\beta |^2+ |\lambda |^2)  \right) + r^2 \ddot{f} \left( |\alpha_u|^2 +|\alpha_v|^2  +2Re \{ \overline{\beta} \alpha_u \alpha_v + \lambda \overline{\alpha_u}\alpha_v \} \right)  \nonumber \\ & & + \,r^4 \dddot{f} |\alpha_u|^2 |\alpha_v|^2. \label{eq:4thderi}
\end{eqnarray}
Furthermore, it follows that
\begin{eqnarray*}
\sum_p |\Psi_{uvp}|^2 & = & r^2\dot{f}^2 \left( 
|\beta |^2 +  |\alpha_u|^2 + |\alpha_v |^2 + 4 \mathcal{B}  +2\mathcal{C} \right)  \\
& & +  r^6 \ddot{f}^2 |\alpha_u\alpha_v|^2 +  r^4  \dot{f}\ddot{f} \left( 4|\alpha_u\alpha_v|^2 + 2\mathcal{B}  \right) \textrm{ and } \end{eqnarray*}

\begin{eqnarray*} |\Psi_{uvx}|^2 &  =  & r^4\dot{f}^2 \left( |\beta|^2  + 4|\alpha_u \alpha_v|^2 + 4 \mathcal{B}  \right) \\
 & & + \, r^8 \ddot{f}^2  |\alpha_u\alpha_v|^2 +  r^6\dot{f} \ddot{f} \left( 4|\alpha_u\alpha_v|^2 + 2 \mathcal{B} \right) 
\end{eqnarray*}
where $\mathcal{B} =  Re (\overline{\beta}\alpha_u\alpha_v )$, and $\mathcal{C} =Re (\lambda \overline{\alpha_u}\alpha_v)$. Using the fact that $h-r^2\dot{f}=f$,  we get
\begin{eqnarray} \label{thirdderivterm}
\sum_{p,q} \Psi_{uvp}\, \overline{\Psi_{uvq}} \,\Psi^{pq} & = &  \frac{1}{f} \sum_p |\Psi_{uvp}|^2 - \frac{\dot{f}}{fh} |\Psi_{uvx}|^2 \nonumber \\
 & = & \frac{r^2 \dot{f}^2 }{f} \left( |\alpha_u|^2 + |\alpha_v|^2+ 2\mathcal{C} \right) + \frac{r^2 \dot{f}^2} {h} |\beta|^2  + \\
 & & + \, |\alpha_u\alpha_v|^2 \left( \frac{ 4r^4\dot{f} \ddot{f} + r^6 \ddot{f}^2 }{h} - \frac{4r^4 \dot{f}^3 } {fh} \right) +  \mathcal{B} \, \frac{2 r^4 \dot{f} \ddot{f} + 4r^2 \dot{f}^2 }{h}. \nonumber
 \end{eqnarray}
Combining (\ref{eq:4thderi})  with (\ref{thirdderivterm}), we obtain
\begin{eqnarray}
4 R^{(h)}_{u\overline{u}v\overline{v}} & = & - \Psi_{u\overline{u}v\overline{v}} + \Psi_{uvp}\, \overline{\Psi_{uvq}} \,\Psi^{pq} \nonumber \\
& = & - \dot{f}(1+|\lambda|^2) -\frac{f\dot{f}} {h}  |\beta|^2 + \left( \frac{r^2 \dot{f}^2 }{f} -r^2\ddot{f} \right) \,  \left( |\alpha_u|^2 + |\alpha_v|^2+ 2\mathcal{C} \right)
 + \nonumber \\
 & & + \, |\alpha_u\alpha_v|^2 \, \left(  \frac{4r^4 \dot{f} \ddot{f} +r^6 \ddot{f}^2 } {h} - \frac{ 4r^4 \dot{f}^3 }{fh}     - r^4 \dddot{f} \right) +  \mathcal{B} \, \frac{4 r^2 \dot{f}^2 - 2r^2 f\ddot{f}}{h}. \label{eq:curvature}
 \end{eqnarray}

In the special case when $u=\frac{x}{r}$ and $v\perp u$, we have $\beta =\lambda =\alpha_v=0$, $\alpha_u=1$, and $\mathcal{C} =\mathcal{B} =0$, so in this case the curvature becomes
\begin{equation}
 -\dot{f} + \frac{r^2\dot{f}^2}{f} - r^2 \ddot{f} = -f (\log f)''. \label{eq:A}
\end{equation}
Note that in this case we have $\Psi_{u\overline{v}} =0$, so $ -f (\log f)''$ is a value of the orthogonal bisectional curvature of the K\"ahler Sasaki metric $h_{\Psi}$.

\begin{proposition}
Let $\Psi$ be a strongly convex rotationally symmetry function on the ball $B_a\subseteq {\mathbb R}^n$ of radius $0<a\leq \infty$. Assume that the K\"ahler Sasaki metric $\mathfrak{h}_{\Psi}$ is complete. If either $n= 2$ and the bisectional curvature is everywhere nonnegative, or if $n\geq 3$ and the orthogonal bisectional curvature is everywhere nonnegative, then $a=\infty $ and the metric $\mathfrak{h}_{\Psi}$ is the flat Euclidean metric on ${\mathbb C}^n$.
\end{proposition}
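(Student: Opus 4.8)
The plan is to reduce the curvature hypothesis to two elementary inequalities for the one–variable profile $f$, and then to show that these, together with completeness and the positivity $h>0$ built into the definition of the Hessian metric, leave no room for a non–constant $f$; constancy of $f$ then forces the metric to be flat.

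By the $O(n)\times\mathbb{R}^n$ symmetry it is enough to evaluate curvatures at a point lying over $x=re_1$, $0<r<a$, where $(\Psi_{ij})=\operatorname{diag}(h,f,\dots,f)$, so that any two coordinate vectors $e_j,e_k$ with $j,k\ge 2$, $j\ne k$, are orthogonal to each other and to $x$ in the K\"ahler metric. First I would take $u=x/r$ and $v$ a unit vector orthogonal to $x$: then $\Psi_{u\overline{v}}=0$, and by (\ref{eq:A}) the orthogonal bisectional curvature there is a positive multiple of $-f(\log f)''$, so in both cases of the hypothesis (nonnegative bisectional curvature implies nonnegative orthogonal bisectional curvature) we get $(\log f)''\le 0$ on $[0,a)$; equivalently $\log f$ is concave, i.e.\ $m:=f'/f=(\log f)'$ is nonincreasing. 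Next I want $f'\le 0$. If $n\ge 3$, take $u=e_2$, $v=e_3$: here $\alpha_u=\alpha_v=\beta=\lambda=\mathcal{B}=\mathcal{C}=0$, so (\ref{eq:curvature}) collapses to $4R^{(\mathfrak{h})}_{u\overline{u}v\overline{v}}=-\dot f$, and nonnegative orthogonal bisectional curvature forces $\dot f=f'/r\le 0$. If $n=2$ there is no such orthogonal pair, so I would instead use the holomorphic sectional curvature at $u=e_2$: now $\alpha_u=\mathcal{B}=\mathcal{C}=0$ and $\beta=\lambda=1$, so (\ref{eq:curvature}) gives $4R^{(\mathfrak{h})}_{e_2\overline{e_2}e_2\overline{e_2}}=-\dot f\,(2+f/h)$, and since $f,h>0$, nonnegative bisectional curvature again yields $\dot f\le 0$. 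Hence in all cases $f>0$ is nonincreasing on $[0,a)$, so $m\le 0$.

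Now I would rule out $a<\infty$. Since $f'\le 0$, $h=f+rf'\le f\le f(0)$, so $\int_0^a\sqrt h\,dr\le a\sqrt{f(0)}$; as $\mathfrak{h}_\Psi$ is complete, so is $g_\Psi$, and Lemma~\ref{completeness} forces this integral to be infinite, whence $a=\infty$. Finally, suppose $m(r_0)<0$ for some $r_0>0$. Since $m$ is nonincreasing, $m(r)\le m(r_0)$ for all $r\ge r_0$, so for $r>1/|m(r_0)|$ we get $1+rm(r)\le 1+rm(r_0)<0$ and therefore $h(r)=f(r)\bigl(1+rm(r)\bigr)<0$, contradicting $h>0$. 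Thus $m\ge 0$ everywhere; combined with $m\le 0$ this gives $m\equiv 0$, so $f'\equiv 0$ and $f$ equals a positive constant $c$. Then $\Psi_{ij}=c\,\delta_{ij}$ on all of $\mathbb{R}^n$, $\Psi$ is quadratic up to an affine term, and $\omega_\Psi=c\sqrt{-1}\sum_i dz_i\wedge d\overline{z_i}$ is the flat metric on $\mathbb{C}^n=T\mathbb{R}^n$, which is the claim.

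The step I expect to need the most care is the second inequality when $n=2$: since no two directions can be simultaneously orthogonal to $x$ and to each other, $f'\le 0$ cannot be extracted from an orthogonal bisectional curvature and one must fall back on the full (holomorphic sectional) bisectional curvature — precisely the reason the hypothesis is strengthened from ``orthogonal bisectional'' to ``bisectional'' when $n=2$. Everything else is a consequence of the concavity of $\log f$ on the half–line together with the positivity constraint $h>0$.
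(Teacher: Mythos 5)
Your proposal is correct and follows essentially the same route as the paper's own proof: extract $(\log f)''\le 0$ from the orthogonal pair $u=x/r$, $v\perp u$ via (\ref{eq:A}), extract $\dot f\le 0$ from a pair with $\alpha_u=\alpha_v=0$ (orthogonal when $n\ge 3$, falling back on full bisectional/holomorphic sectional curvature when $n=2$), then use $h\le f(0)$ with Lemma \ref{completeness} to force $a=\infty$, and finally use $h=f(1+r(\log f)')>0$ together with monotonicity of $(\log f)'$ to conclude $f$ is constant. Your explicit vector choices and the contradiction argument in the last step are just slightly more detailed phrasings of the paper's same steps.
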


\begin{proof} Suppose that $\mathfrak{h}_{\Psi}$ were to have everywhere nonnegative orthogonal bisectional curvature ($n\geq 3$) or nonnegative bisectional curvature ($n=2$). If  $\alpha_u=\alpha_v=0$, then $\mathcal{C}=\mathcal{B}=0$, and
$$ 4 R^{(\mathfrak{h})}_{u\overline{u}v\overline{v}} = - \dot{f} \left( 1+ |\lambda|^2 + \frac{f}{h}|\beta|^2 \right) .$$
Note that when $n\geq 3$, we can choose such $u$ and $v$ so that $\Psi_{u\overline{v}}=0$. So under the assumptions on the curvature, we always have $\dot{f} \leq 0$ and $(\log f)'' \leq 0$.
For convenience, denote $F=\log f$, which is a smooth function on $[0,a)$. We have $F'\leq 0$, $F''\leq 0$, and
$$ h= (rf)' = (1+rF')e^F > 0. $$
If $a< \infty $, then since $h\leq e^F \leq e^F(0)$ as $F$ is non-increasing, we see that $\int_0^a \sqrt{h} \, dr < \infty$, contradicting with the completeness of the metric. So we may assume that $a=\infty$. Since $h>0$, we find that
$$ 0\geq F' > - \frac{1}{r} , \ \ \ 0<r<\infty. $$
This forces $\lim_{r\rightarrow \infty }F(r)=0$. But $F'$ is nonpositive and non-increasing, so must be constantly zero. This implies that $f$ is a positive constant, hence $\mathfrak{h}_{\Psi}$ the ($f$-multiple of the standard) flat complex Euclidean metric on ${\mathbb C}^n$.
\end{proof}

 It is worth noting that Yau's Uniformization Conjecture (partially proven by Liu \cite{Liu}) states that any complete K\"ahler metric with non-negative bisectional curvature is biholomorphic to $\mathbb{C}^n$. We have made strong assumptions on the structure of our manifold (in particular $O(n) \times \mathbb{R}^n$-symmetry), which is why it is possible to conclude that the metric is actually flat, and not simply that it is biholomorphic to $\mathbb{C}^n.$

\section{The Anti-Bisectional Curvature}
\label{antibisectionalsection}

For K\"ahler Sasaki metrics, there is also a notion of ``anti-bisectional curvature," which is defined as follows:
\begin{equation}
\mathfrak{A} (u,v) = \sum_{i,j,k,\ell} \left( -\Psi_{ijk\ell} + \sum_{p,q} \Psi_{ijp} \Psi_{k\ell q} \Psi^{pq} \right)u_iu_j v_k v_{\ell}, 
\end{equation} 
where $u=(u_1, \ldots , u_n)$, $v=(v_1, \ldots , v_n)$ are vectors in ${\mathbb R}^n$. 

\begin{definition}[Non-negative anti-bisectional curvature] \label{antibisectional}
The K\"ahler metric $\mathfrak{h}_\Psi$ has non-negative orthogonal anti-bisectional curvature (abbreviated (NAB)), if $\mathfrak{A} (u,v)\geq 0$ for any $u$, $v\in {\mathbb R}^n$. 
\end{definition}

\begin{definition}[Non-negative orthogonal anti-bisectional curvature] \label{NOAB}
The K\"ahler metric $\mathfrak{h}_\Psi$ has non-negative orthogonal anti-bisectional curvature (abbreviated (NOAB)), if $\mathfrak{A} (u,v)\geq 0$ for any $u$, $v\in {\mathbb R}^n$ such that $\Psi_{uv} = \sum_{i,j} u_iv_j \Psi_{ij} =0$. 
\end{definition}
Note that if we write $U= \sum_i u_i \frac{\partial }{\partial z_i}$ and $V= \sum_i v_i \frac{\partial }{\partial z_i}$. Then we have
\begin{equation}
\mathfrak{A} (u,v) = 4 R^{(\mathfrak{h})} (U, \overline{V}, U, \overline{V}).
\end{equation}
 So (NOAB) and (NAB) are positivity conditions on the curvature of $\mathfrak{h}$, but are different from requiring $\mathfrak{h}$ to have nonnegative orthogonal bisectional curvature or non-negative bisectional curvature. 
If one requires a K\"ahler metric  $\omega$ to satisfy the curvature condition $R(X,\overline{Y}, X, \overline{Y})\geq 0$ for any type $(1,0)$ vectors $X$ and $Y$, then it is necessary for $\omega$ to have constant holomorphic sectional curvature and so be a complex space form. As such, the above (NOAB) and (NAB) conditions are specialized to K\"ahler Sasaki metrics and do not generalize naively to arbitrary K\"ahler metrics. We leave it as an open question to find meaningful generalization of these conditions for more general K\"ahler metrics.

\subsection{The orthogonal anti-bisectional curvature of $O(n)$-symmetric metric}

We now return to the case where the tube domain is rotationally symmetric. We want understand when $T B_a$ will have (NOAB), which means
\begin{equation} 
\mathfrak{A} (u,v) = - \Psi_{uuvv} + \sum_{p,q} \Psi_{uup} \Psi_{vvq} \Psi^{pq} \geq 0 
\end{equation}
for any $u$, $v\in {\mathbb R}^n$  such that $\Psi_{uv}=0$. To find conditions which ensure (NOAB), we will rewrite the above expression in terms of some auxilliary functions.

Without loss of generality, we may assume that $|u|=|v|=1$ under the Euclidean norm. Let us write $\beta = \langle u,v\rangle = \sum_i u_iv_i$, $\alpha_u = \langle u, \frac{x}{r}\rangle$, and $\alpha_v$ similarly. Then since
$$ 0= \Psi_{uv} = \Psi_{ij} u_i v_j = f \beta + r^2\dot{f}\alpha_u \alpha_v ,$$
we find
\begin{equation}
 \beta = -\frac{r^2 \dot{f}}{f} \alpha_u\alpha_v = \left(1-\frac{h}{f} \right) \alpha_u\alpha_v.
\end{equation} 
From the formulas for $\Psi^{ij}$, $\Psi_{ijk}$, and $\Psi_{ijk\ell}$, we get
\begin{eqnarray} \label{3deriv1}
\Psi_{uuk} & = &  \dot{f} (x_k + 2r\alpha_u u_k) + r^2\ddot{f} \alpha_u^2 x_k \\
\Psi_{uux} & = &  r^2\dot{f} (1 + 2\alpha_u^2) + r^4\ddot{f} \alpha_u^2 \label{3deriv2} \\
\Psi_{uuvv} & = &  \dot{f} (1 + 2\beta^2) + r^2\ddot{f} (\alpha_u^2 + \alpha_v^2 + 4\beta \alpha_u \alpha_v) + r^4 \dddot{f} \alpha_u^2 \alpha_v^2. \label{4deriv}
\end{eqnarray}
Combining Equations (\ref{3deriv1}) and (\ref{3deriv2}), we find
\begin{eqnarray}
\Psi_{uuk} \Psi_{vvk} & = & 
    r^2\dot{f}^2 ( 1+ 2\alpha_u^2+2\alpha_v^2+4\beta \alpha_u\alpha_v) \nonumber \\
    & & +  r^4 \dot{f}\ddot{f} (\alpha_u^2 + \alpha_v^2 + 4\alpha_u^2\alpha_v^2)  + r^6 \ddot{f}^2 \alpha_u^2\alpha_v^2
 \label{3deriv3} \\
\Psi_{uux} \Psi_{vvx} & = &  r^4\dot{f}^2 ( 1+ 2\alpha_u^2+2\alpha_v^2+4\alpha_u^2\alpha_v^2) \nonumber \\
&  & + r^6 \dot{f}\ddot{f} (\alpha_u^2 + \alpha_v^2 + 4\alpha_u^2\alpha_v^2)  + r^8 \ddot{f}^2 \alpha_u^2\alpha_v^2, \label{3deriv4} 
\end{eqnarray}
where $k$ is summed in the first line. Since $\beta = (1-\frac{h}{f})\alpha_u\alpha_v$ and 
$$\frac{1}{f} - \frac{\dot{f}}{fh}r^2 = \frac{1}{f} - \frac{h-f}{fh} = \frac{1}{h}, $$
we obtain from Equations (\ref{3deriv3}) and (\ref{3deriv4}) that
\begin{eqnarray*}
 \sum_{p,q}\Psi_{uup}\Psi_{vvq} \Psi^{pq} & = &   \frac{1}{f} \sum_k \Psi_{uuk} \Psi_{vvk} - \frac{\dot{f}}{fh} \Psi_{uux}\Psi_{vvx} \\
& = & \frac{1}{f} \dot{f}^2 \left(-4\frac{h}{f} \right)\alpha_u^2\alpha_v^2 + \frac{1}{h} r^2\dot{f}^2 ( 1+ 2\alpha_u^2+2\alpha_v^2+4\alpha_u^2\alpha_v^2) + \\
& &  + \  \frac{1}{h}  r^4 \dot{f}\ddot{f} (\alpha_u^2 + \alpha_v^2 + 4\alpha_u^2\alpha_v^2)  + \frac{1}{h} r^6 \ddot{f}^2 \alpha_u^2\alpha_v^2.
\end{eqnarray*}
Using the above identity and Equation (\ref{4deriv}), we find that
\begin{equation}
\mathfrak{A} (u,v) = A + B (\alpha_u^2 + \alpha_v^2) + C \alpha_u^2 \alpha_v^2,
\end{equation}
where
\begin{eqnarray*}
A  & = & \frac{1}{h}r^2\dot{f}^2 - \dot{f} , \\
B & = & \frac{2}{h} r^2\dot{f}^2 + \frac{1}{h} r^4\dot{f} \ddot{f} - r^2 \ddot{f}, \\
C& = &  - \frac{4h}{f^2}r^2\dot{f}^2 + \frac{4}{h}r^2\dot{f}^2  + \frac{4}{h}r^4 \dot{f}\ddot{f} + \frac{1}{h}r^6\ddot{f}^2- \ 2(1-\frac{h}{f})^2 \dot{f} - 4(1-\frac{h}{f})r^2 \ddot{f} - r^4 \dddot{f}.
\end{eqnarray*}
Using the relation $\ r^2\dot{f} = h-f$ to simplify $A, B$ and $C$, we find
\begin{eqnarray} \label{Ayy}
A  & = &  -\frac{f}{h} \dot{f},  \\
B & = &   \frac{r^2}{h}(2\dot{f}^2 -f\ddot{f}), \label{Bee} \\
C& = & \left(-4\frac{f}{h} + 2 + 8\frac{h}{f} -6 \frac{h^2}{f^2} \right) \dot{f} + 4\left(\frac{h}{f} - \frac{f}{h} \right) r^2\ddot{f} + \frac{r^6}{h}\ddot{f}^2 -r^4\dddot{f}. \label{Cee}
\end{eqnarray}
Now let us try to understand the (NOAB) condition for these metrics. Recall that this says that $\mathfrak{A} (u,v)\geq 0$ for any unit vectors $u,v\in {\mathbb R}^n$ such that $\Psi_{uv}=0$ (or equivalently, vectors for which $\beta = (1-\frac{h}{f})\alpha_u\alpha_v$). Fixing the unit vector $\frac{x}{r}$ and denoting its orthogonal complement in ${\mathbb R}^n$ by $W$, we write 
$$ u = \alpha_u \frac{x}{r} + u', \ \ \ v = \alpha_v \frac{x}{r} + v', $$
where $u'$, $v'\in W$. Since $u$, $v$ are unit vectors, we have $|u'|^2=1-\alpha_u^2$ and $|v'|^2=1-\alpha_v^2$. Also, 
$$ \beta = \langle u,v\rangle = \alpha_u\alpha_v + \langle u', v'\rangle ,$$ 
so we get
\begin{equation*}
-\frac{h}{f} \alpha_u\alpha_v = \langle u', v'\rangle  = |u'| \, |v'| \cos \theta 
\end{equation*}
where $\theta$ is the angle between $u'$ and $v'$. When $n=2$, $W$ is one dimensional, hence we get
\begin{equation}
\frac{h^2}{f^2} \alpha_u^2 \alpha_v^2 = (1-\alpha_u^2) (1-\alpha_v^2) 
\end{equation}
When $n\geq 3$, we find instead that
\begin{equation}
\frac{h^2}{f^2} \alpha_u^2 \alpha_v^2 \leq (1-\alpha_u^2) (1-\alpha_v^2)
\end{equation}
Writing $s=\alpha_u^2$ and $t=\alpha_v^2$, we know that (NOAB) simply means that the function $F(s,t)= A+B(s+t)+Cst$ is nonnegative for all $s,t\in [0,1]$ such that $\frac{h^2}{f^2} st = (1-s)(1-t)$ when $n=2$ or $\frac{h^2}{f^2} st \leq (1-s)(1-t)$ when $n\geq 3$. Let $\gamma$ be the curve segment:
$$ \frac{h^2}{f^2} st = (1-s)(1-t) , \ \ \  0\leq s,t\leq 1 $$
in the $st$-plane. If $h=f$, then $\gamma$ is the line segment $s+t=1$. If $h\neq f$, then $\gamma$ is the segment of the hyperbola contained in the unit square, and it goes from $(1,0)$ to $(0,1)$. The intersection of $\gamma$ with the diagonal line is $(s_0,s_0)$, where $s_0= \left(1+\frac{h}{f} \right)^{-1}$. Let us define
\begin{equation} \label{Dee}
D = \left( 1+\frac{h}{f} \right)^2 A + 2 \left( 1+\frac{h}{f} \right)B + C,
\end{equation}
where $A$, $B$, and $C$ are given by Equations \ref{Ayy}-\ref{Cee}.
We have the following.
\begin{proposition} \label{NOABforOnsymmetric}
For a rotationally symmetric convex function $\Psi$ on the ball $B_a\subseteq {\mathbb R}^n$, $(T B_a, \mathfrak{h}_\Psi)$ has (NOAB) if and only if 
\begin{enumerate}
\item when $n=2$:  $A+B$, $D$ are everywhere non-negative 
\item when $n\geq 3$:  $A$, $A+B$, $D$ are everywhere non-negative.
\end{enumerate}
Here, A, B and D are given by \ref{Ayy}, \ref{Bee} and \ref{Dee}, respectively.
\end{proposition}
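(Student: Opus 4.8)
The plan is to reduce the (NOAB) condition to the nonnegativity of a single quadratic polynomial in one variable. Recall from the computations preceding the proposition that for unit vectors $u,v\in\mathbb R^n$ with $\Psi_{uv}=0$ one has $\mathfrak A(u,v)=F(s,t):=A+B(s+t)+Cst$, where $s=\alpha_u^2,\ t=\alpha_v^2\in[0,1]$. The first step is to identify exactly which pairs $(s,t)$ arise this way: writing $u=\alpha_u\frac xr+u'$ and $v=\alpha_v\frac xr+v'$ with $u',v'\in W:=(x/r)^\perp$, the constraint $\Psi_{uv}=0$ is equivalent to $\langle u',v'\rangle=-\frac hf\alpha_u\alpha_v$. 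By Cauchy--Schwarz and a dimension count on $W$, the admissible set is the arc $\gamma=\{(s,t)\in[0,1]^2:\tfrac{h^2}{f^2}st=(1-s)(1-t)\}$ when $n=2$ (since $\dim W=1$ forces $u',v'$ to be collinear), whereas for $n\ge3$ every point of the region $R=\{(s,t)\in[0,1]^2:\tfrac{h^2}{f^2}st\le(1-s)(1-t)\}$ is realized (construct $u',v'$ in $W$ with prescribed norms and inner product, using $\dim W\ge2$). Note $R$ is exactly the set of $(s,t)$ with $0\le t\le t(s):=\tfrac{1-s}{w(s)}$, where $w(s):=1+\big(\tfrac{h^2}{f^2}-1\big)s$, and $w>0$ on $[0,1]$.

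The key observation is that along the boundary curve $t=t(s)$ the function $F$ becomes, after clearing denominators, a quadratic. Setting $\mu:=h^2/f^2$ and using $r^2\dot f=h-f$, a direct substitution gives
\[
G(s):=w(s)\,F\big(s,t(s)\big)=(A+B)+\big[A(\mu-1)+C\big]s+\big[B(\mu-1)-C\big]s^2 ,
\]
which satisfies $G(0)=A+B$, $G(1)=\mu\,G(0)$, and $G(s_0)=\tfrac hf\,s_0^2\,D$ with $s_0=(1+\tfrac hf)^{-1}$ the point where $\gamma$ meets the diagonal; in particular, since $w>0$, the sign of $F$ along $t=t(s)$ is the sign of $G$, and $D\ge0\iff G(s_0)\ge0$. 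I would then prove the following elementary lemma: \emph{a quadratic $G$ with $G(1)=\mu G(0)$ is nonnegative on $[0,1]$ if and only if $G(0)\ge0$ and $G(s_0)\ge0$.} The nontrivial implication follows from the identity $G=G(0)\,G_1+\tfrac{G(s_0)}{s_0(1-s_0)}\,G_2$ (both sides are quadratics agreeing at $0,s_0,1$), where $G_2(s)=s(1-s)\ge0$ on $[0,1]$ and $G_1$ is the quadratic determined by $G_1(0)=1,\ G_1(s_0)=0,\ G_1(1)=\mu$; the crucial point is that, because $\mu=\big(\tfrac{1-s_0}{s_0}\big)^2$, this $G_1$ is the perfect square $G_1(s)=\big(\tfrac{s-s_0}{s_0}\big)^2\ge0$. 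Hence $G(0),G(s_0)\ge0$ force $G\ge0$ on $[0,1]$.

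With the lemma in hand the two cases of the proposition follow. For $n=2$, (NOAB) means $F\ge0$ on $\gamma$, i.e.\ $G\ge0$ on $[0,1]$, which by the lemma is equivalent to $A+B\ge0$ and $D\ge0$ holding at every radius $r\in[0,a)$. For $n\ge3$, (NOAB) means $F\ge0$ on $R$; since $F$ is affine in $t$ for fixed $s$ and $t$ ranges over $[0,t(s)]$, one has $\min_R F=\min_{s\in[0,1]}\min\{F(s,0),F(s,t(s))\}$, where $\min_{s}F(s,0)=\min\{A,A+B\}$ (the function $F(s,0)$ being affine in $s$) and $\min_s F(s,t(s))\ge0$ iff $G\ge0$ on $[0,1]$ iff $A+B\ge0$ and $D\ge0$; combining, (NOAB) $\iff A,\,A+B,\,D\ge0$ everywhere. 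The step I expect to be the main obstacle is the bookkeeping needed to pin down the realizable set of $(s,t)$ correctly --- in particular that $n=2$ yields the curve $\gamma$ while $n\ge3$ yields the full region $R$ --- together with verifying the perfect-square identity $G_1(s)=((s-s_0)/s_0)^2$, which is precisely what makes the two-point nonnegativity test valid and is where the specific value $s_0=(1+h/f)^{-1}$ is indispensable.
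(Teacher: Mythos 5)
Your proof is correct, and its sufficiency direction takes a genuinely different route from the paper's. The necessity direction is the same in both: evaluate $F(s,t)=A+B(s+t)+Cst$ at the realizable points $(1,0)$, $(s_0,s_0)$, and (for $n\geq 3$) $(0,0)$, exactly as the paper does, and your identification of the admissible set (the curve $\gamma$ for $n=2$, the full region for $n\geq 3$) matches the discussion preceding the proposition. For sufficiency, the paper checks the boundary of the region and then analyzes interior critical points of the bilinear function $F$, splitting on the sign of $C$ to conclude $F=(AC-B^2)/C\geq 0$ there; the nonnegativity of $F$ along $\gamma$ itself is only asserted as ``easy to see.'' You instead prove that curve statement in full: clearing the positive factor $w(s)$ turns $F$ along $\gamma$ into a quadratic $G$ with $G(1)=\mu G(0)$, and the three-point identity
\[
G(s)=G(0)\left(\frac{s-s_0}{s_0}\right)^2+\frac{G(s_0)}{s_0(1-s_0)}\,s(1-s),
\]
valid precisely because $\mu=\left((1-s_0)/s_0\right)^2$, gives the two-point test $G\geq 0$ on $[0,1]$ iff $G(0)\geq 0$ and $G(s_0)\geq 0$, i.e. $A+B\geq 0$ and $D\geq 0$. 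You also replace the interior critical-point analysis by the simpler observation that $F$ is affine in $t$ for fixed $s$, so its minimum over the region sits on $\{t=0\}$ or on $\gamma$. What each approach buys: yours makes rigorous the step the paper glosses over, avoids the case split on the sign of $C$, and pinpoints exactly where the special value $s_0=(1+h/f)^{-1}$ is used; the paper's argument is shorter once the boundary nonnegativity is granted. Your write-up is a complete and arguably cleaner proof of the proposition.
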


\begin{proof}
As noted above, (NOAB) means that $F(s,t)=A+B(s+t)+Cst\geq 0$ for any $0\leq s,t\leq 1$ satisfying the above constraint condition, which means that $(s,t)$ lies on the curve segment $\gamma$ if $n=2$ and $(s,t)$ lies in the sub-region $\Omega$ in the unit square bounded by the coordinate axes and $\gamma$. 

First let us assume that $n=2$. Since $F(1,0)=A+B$ and $F(s_0,s_0)=s_0^2D$, we know that both $A+B$ and $D$ are non-negative when (NOAB) is satisfied. Conversely, if both $A+B$ and $D$ are non-negative, then it is easy to see that $F$ is non-negative along the entire curve segment $\gamma$, hence (NOAB) is satisfied. 
 
Now assume that $n\geq 3$. If (NOAB) is satisfied, then $A=F(0,0)$, $A+B=F(1,0)$, $D=F(s_0,s_0)$ are all non-negative. Conversely, suppose that $A$, $A+B$, $D$ are all non-negative. Then it is not hard to see that  $F$ is non-negative along the boundary of the domain $\Omega$. We claim that $F$ is also non-negative in the interior of $\Omega$. To see this, suppose that $(s_1, t_1)$ is a critical point of $F$ in the interior of $\Omega$. Then we have $B+Cs_1=B+Ct_1=0$ and $s_1,t_1\in (0,1)$. So $B$ and $C$ are non-zero and with opposite sign.

We consider two separate cases. \begin{enumerate}
    \item If $C>0$, then $s_1=t_1=-\frac{B}{C}$ so $0<-B<C$. On the other hand, since  $A+B\geq 0$, we have $-B \leq A$. So $B^2\leq AC$, hence
$$ F(s_1, t_1) = A - \frac{B^2}{C} = \frac{1}{C} ( AC-B^2) \geq 0. $$
\item On the other hand, if $C<0$, then $AC-B^2\leq AC \leq 0$, hence $F(s_1, t_1)=\frac{1}{C} ( AC-B^2) \geq 0$.
\end{enumerate} 
In either case, $F(s_1,t_1) \geq 0$, so the orthogonal anti-bisectional curvature is non-negative. This completes the proof of the proposition. 
\end{proof}

The above proposition says that to find $O(n)$-symmetric metrics with (NOAB), we need only to focus on the derived functions $A$, $A+B$, and $D$. As such, we will analyze these terms more carefully. First let us try to understand the second order derivative term $A+B$. To make the equation more tractable, it is helpful to define further auxilliary functions. To this end, we define
\begin{equation} \label{ellandlambda}
\ell = \frac{1}{f}, \ \ \ \lambda =\ell - r\ell'
\end{equation}
Recall that in terms of the original convex function $\phi:\mathbb{R}_{\geq 0} \to \mathbb{R}$ used to define a radially symmetric potential, $\ell=\frac{r}{\phi^\prime(r)}$.

It follows immediately that $\ell$ is a positive function. From the formula $h=(rf)'= \frac{\ell - r\ell'}{\ell^2}=\frac{\lambda }{\ell^2}$, we know that $\lambda $ is also a positive function. By a straight forward computation, we find 
\begin{eqnarray}
A & = & \frac{\ell'}{r \lambda \ell}\\
A+B & = & \frac{\ell''}{\lambda \ell}\\
D & = & \left(\!-\frac{1}{r\lambda \ell} + \frac{1}{r\ell^2} -\frac{3\lambda }{r\ell^3} + \frac{6\lambda^2}{r\ell^4} \right) \ell' + \left(\!-\frac{7}{\lambda \ell^2} + \frac{6}{\ell^3} + \frac{8\lambda}{\ell^4} \right) \ell'^2  \nonumber \\
 & & + \ \left( \frac{4r}{\lambda \ell^3} + \frac{6r}{\ell^4} \right) \ell'^3  +  \frac{4r^2}{\lambda \ell^4} \ell'^4 + \left( \frac{6}{\lambda \ell} - \frac{1}{\ell^2} -\frac{4\lambda }{\ell^3} \right) \ell'' \nonumber \\
 & & - \, \left( \frac{2r}{\lambda \ell^2} +\frac{6r}{\ell^3} \right) \ell' \ell''  - \frac{4r^2}{\lambda \ell^3} \ell'^2\ell'' + \frac{r^2}{\lambda \ell^2}\ell''^2 + \frac{r}{\ell^2} \ell'''.
\end{eqnarray}
From this, we get the following:

\begin{proposition}
Let $\Psi$ be a strictly convex rotationally symmetric function on the ball $B_a\subseteq {\mathbb R}^n$ (where $0<a\leq \infty$) so that the K\"ahler Sasaki metric $g_{\Psi}$ is complete and satisfies (NOAB). If $n\geq 3$, then $a=\infty$. 
\end{proposition}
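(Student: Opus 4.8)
The plan is to combine the structural characterization of (NOAB) from Proposition \ref{NOABforOnsymmetric} with the completeness test of Lemma \ref{completeness}. Since $n \ge 3$, (NOAB) forces $A \ge 0$ everywhere on $[0,a)$, and $A$ has the closed form $A = \frac{\ell'}{r\lambda\ell}$ with $\ell = 1/f$. Here $\ell > 0$ because $f > 0$, and $\lambda = \ell - r\ell' > 0$ because $\lambda/\ell^2 = h > 0$; hence $A \ge 0$ is equivalent to $\ell' \ge 0$ on $(0,a)$. So the first step is to observe that $\ell$ is nondecreasing, and therefore $\ell(r) \ge \ell(0) > 0$ for every $r \in [0,a)$ (recall $f \in C^\infty[0,a)$, so $\ell(0)$ is a finite positive constant).

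The second step is to convert the monotonicity of $\ell$ into an a priori bound on $h$. Writing $h = \frac{\lambda}{\ell^2} = \frac{1}{\ell} - \frac{r\ell'}{\ell^2}$ and using $\ell' \ge 0$ yields $0 < h \le \frac{1}{\ell} \le \frac{1}{\ell(0)}$, a bound uniform in $r$.

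The final step is the contradiction. Supposing $a < \infty$, we would get
\[ \int_0^a \sqrt{h}\,dr \le \frac{a}{\sqrt{\ell(0)}} < \infty, \]
contradicting completeness via Lemma \ref{completeness}; hence $a = \infty$.

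I do not expect a genuine obstacle in this argument — it is short once the right quantity ($\ell$, equivalently $1/f$) is isolated and its monotonicity extracted from $A \ge 0$. The one point worth emphasizing is that the hypothesis $n \ge 3$ enters essentially, through the inequality $A \ge 0$: for $n = 2$ Proposition \ref{NOABforOnsymmetric} only supplies $A + B \ge 0$ and $D \ge 0$, which say nothing directly about the sign of $\ell'$, so one should not expect the conclusion to persist there. A routine preliminary check is that $\ell$ is smooth (and even in $r$, so that $\ell'(0) = 0$ and the expression for $A$ is regular at the origin), which is immediate from the smoothness of $f$ on $[0,a)$ already recorded above.
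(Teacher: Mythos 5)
Your argument is correct and follows essentially the same route as the paper: extract sign information on $\ell$ from the characterization of (NOAB), obtain a uniform upper bound on $h$, and contradict the completeness criterion $\int_0^a\sqrt{h}\,dr=\infty$ when $a<\infty$. The only (harmless) difference is that you use just $A\geq 0$ to bound $h\leq 1/\ell \leq 1/\ell(0)$ directly, whereas the paper also invokes $A+B\geq 0$ (i.e.\ $\ell''\geq 0$, so $\lambda'\leq 0$) to conclude that $h=\lambda/\ell^2$ is non-increasing and hence bounded by $h(0)$; both yield the same contradiction.
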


\begin{proof}
By the previous proposition, we have $A$, $A+B$, $D$ all non-negative. Thus $\ell'\geq 0$, $\ell''\geq 0$. So $h=\frac{\lambda}{\ell^2}$ is non-increasing since $\lambda' = -r \ell'' \leq 0$. The completeness of $g_{\Psi}$ means that the integral $\int_0^{a} \sqrt{h}dr = \infty$. So $a$ must be $\infty$. 
\end{proof}

In particular, any such metric must be biholomorphic to $\mathbb{C}^n$ with its standard complex structure. This analysis also allows us to find several examples of such metrics, two of which we provide here.

\begin{example} \label{Example1}
Consider the function $\ell = C+r$ on $[0,\infty )$, where $C>0$ is a constant. We have $\ell'=1$, $\ell''=0$, and $\lambda = C$. Hence
\begin{eqnarray*}
r\ell^4 D &= & \frac{1}{C} \left(4r^3+4r^2\ell  -7r\ell^2 -\ell^3 \right) + \left( 6r^2 +6r\ell +\ell^2 \right)  + \ C \left(8r-3\ell  \right) + 6C^2 \\
& = & - \frac{1}{C}\left( 13Cr^2 +10C^2r +C^3 \right) + \left( 13r^2 +8Cr+C^2 \right)  + C(5r-3C) + 6C^2 \\
& = & 3Cr + 3C^2 \ = \ 3C \,\ell.
\end{eqnarray*}
So the metric has (NOAB). Also, since $h=\frac{C}{\ell^2}$, we have $\int_0^{\infty} \sqrt{h}dr = \infty$, hence $g_{\Psi}$ is complete.
\end{example}

It is worth noting that the curvature of this example is unbounded near the origin, although that is not important for the analysis in this paper.

\begin{example} \label{ComplicatedExample}
Consider the function
$$ \ell = r + \frac{1}{L^2}, \ \ \ L = \log (c+r) $$
for $c \log(c)^3 \geq 2$. In this case, if we write $R=c+r$, then we have that
$$ \lambda = \frac{1}{L^2} + \frac{2r}{RL^3}>0, \ \ \ \ell' = 1-\frac{2}{RL^3} > 0, \ \ \ \ell'' = \frac{2}{R^2L^3} + \frac{6}{R^2L^4} > 0, $$
and $\sqrt{h} = \frac{\sqrt{\lambda}}{\ell} \sim \frac{1}{RL}$, so $\int_0^{\infty } \sqrt{h}dr = \infty$ and the metric is complete.
Furthermore, $D\geq 0$ so this is another examples of an $O(n)$-symmetric complete metric with (NOAB).
\end{example}

The verification of this example is more involved and we postpone a proof that $D \geq 0$ to the Appendix.

\section{The MTW tensor and the Regularity of Optimal Transport}
\label{Regularityoftransport}

Apart from complex geometry, the primary motivation for considering anti-bisectional curvature arises from optimal transport. To explain this, we first discuss some preliminary background on optimal transport.  For a more complete reference on this topic, we refer the reader to the survey paper of DePhilippis and Figalli \cite{DPF} or the book by Villani \cite{VillaniOTON}.

The original transport problem was considered by Monge in 1781 \cite{Monge}. In his work, he sought to find the most cost-efficient way to transport rubble (\textit{d\'eblais}) into a desired configuration to build a fortification (\textit{remblais}). In the modern setting, this problem is formalized in terms of the Kantorovich formulation.

Given probability spaces $(X, \mu)$ and $(Y,\nu)$ and a lower semi-continuous cost function $c(x,y):X \times Y \to \mathbb{R}$, the Kantorovich problem seeks to find a coupling $\pi$ of $\mu$ and $\nu$ which achieves
\begin{equation} \label{Kantorovich}
   \inf_{\pi \in \Pi(\mu,\nu)} \int_{X \times Y} c(x,y)  d \pi.
\end{equation}

Here, $\Pi(\mu,\nu)$ is the set of all couplings of $(X, \mu)$ and $(Y,\nu)$ (i.e. probability measures on $X \times Y$ whose marginals are $\mu$ and $\nu$, respectively). Under mild regularity assumptions on $\mu, \nu$ and $c$, such a coupling exists, which intuitively describes how mass from $X$ is transported to $Y$. In general, the optimal coupling may split mass at a single point and distribution it throughout $Y$. However,
when $X$ and $Y$ are domains in Euclidean space (or domains in a smooth manifold) and certain technical conditions hold, Gangbo and McCann \cite{GangboMcCann} showed that the optimal coupling is induced by a map $T:X \to Y$. More precisely, they showed the following result, which is based off an earlier work of Brenier \cite{Brenier} for the cost function $c(x,y)= |x-y|^2$.

\begin{theorem*}\label{Deterministictransport} 
Let $X$ and $Y$ be two open domains of $\mathbb{R}^n$ and consider a cost function $c:X \times Y \to \mathbb{R}$. Suppose that $ d \mu$ is a smooth probability density supported on $X$ and that $d \nu $ is a smooth probability density supported on $Y$. Suppose that the following conditions hold:

\begin{enumerate}
\item The cost function $c$ is of class $C^4$ with $\| c \|_{C^4(X \times Y)} < \infty$
\item The following two conditions hold, which are collectively called (Twist):
\begin{enumerate}
    \item For any $x \in X$, the map $ Y \ni y \to   D_x c(x,y) \in \mathbb{R}^n$ is injective.
\item For any $y \in Y$, the map $ X \ni x \to  D_y c (x,y) \in \mathbb{R}^n$ is injective.
\end{enumerate}
\item The mixed Hessian matrix $ c_{i,j} = \frac{\partial^2}{\partial x^i \partial y^j} c(x,y) $ is invertible for all $(x,y) \in X \times Y$. In other words, $\det (c_{i,j})(x,y) \neq 0$. Through the rest of the paper, we will denote this condition as (NonDeg).
\end{enumerate}

Then:
\begin{enumerate}
    \item There exists a unique solution to the Kantorovich problem (\ref{Kantorovich}).
    \item This solution is induced by a measurable map $T:X \to Y$ satisfying $T_\sharp \mu = \nu$, which is injective $d \mu$-a.e.
    \item There exists a function $u: X \to \mathbb{R}$ such that $T_u(x) := c\textrm{-}\exp_x(\nabla u(x))$, where $c\textrm{-}\exp$ is the so called $c$-exponential map.
    \item The potential $u$ satisfies the following Monge-Ampere type equation
\begin{equation} \label{Monge Ampere}
 | \det(\nabla T_u(x))| = \frac{ d \mu (x)}{ d \nu(T_u(x))} \hspace{.2in} d \mu-a.e. 
\end{equation}
\end{enumerate}
\end{theorem*}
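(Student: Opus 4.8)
The plan is to follow the classical route through Kantorovich duality, using the (Twist) and (NonDeg) hypotheses at the two crucial places. First I would establish existence of an optimal coupling $\pi$ for (\ref{Kantorovich}): since $\mu,\nu$ are tight and $\Pi(\mu,\nu)$ is weakly compact, while $\pi\mapsto\int_{X\times Y}c\,d\pi$ is weakly continuous (as $\|c\|_{C^4}<\infty$ makes $c$ bounded and continuous), a minimizer exists. Next I would invoke Kantorovich duality (with no duality gap) to obtain a $c$-concave potential $u:X\to\mathbb{R}$, with $c$-transform $u^c(y)=\inf_{x'}\bigl(c(x',y)-u(x')\bigr)$, satisfying $u(x)+u^c(y)\le c(x,y)$ everywhere, with equality on $\operatorname{supp}\pi$. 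Being $c$-concave with $c\in C^4$, the function $u$ is locally Lipschitz, hence differentiable $\mathcal{L}^n$-a.e.\ by Rademacher's theorem, hence $\mu$-a.e.\ since $\mu\ll\mathcal{L}^n$. For $(x,y)\in\operatorname{supp}\pi$ with $u$ differentiable at $x$, the function $x'\mapsto c(x',y)-u(x')$ attains an interior minimum at $x'=x$, so $\nabla u(x)=D_xc(x,y)$; by (Twist)(a), $y$ is then the unique solution of $D_xc(x,\cdot)=\nabla u(x)$, which is by definition $c\textrm{-}\exp_x(\nabla u(x))=:T_u(x)$. Therefore $\pi$ is concentrated on the graph of the Borel map $T_u$, so $\pi=(\operatorname{id}\times T_u)_\sharp\mu$ and the optimal plan is induced by $T_u$.

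For uniqueness, I would observe that the fixed pair $(u,u^c)$ is a pair of dual optimizers, so by strong duality every optimal coupling $\pi'$ satisfies $\int_{X\times Y}\bigl(c(x,y)-u(x)-u^c(y)\bigr)\,d\pi'=0$; since the integrand is continuous and nonnegative, $\operatorname{supp}\pi'$ lies in $\{u(x)+u^c(y)=c(x,y)\}$ and the first-order argument above applies verbatim, forcing $\pi'=(\operatorname{id}\times T_u)_\sharp\mu$. Running the entire argument with the roles of $X$ and $Y$ interchanged and using (Twist)(b) yields a map $S_u:Y\to X$ with $S_u\circ T_u=\operatorname{id}$ $\mu$-a.e., which gives the $\mu$-a.e.\ injectivity of $T_u$.

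It then remains to derive the Monge-Ampere equation (\ref{Monge Ampere}). Since $u$ is $c$-concave with $c\in C^4$, it is semiconcave, so Aleksandrov's theorem makes $u$ twice differentiable at $\mathcal{L}^n$-a.e.\ (hence $\mu$-a.e.) point; differentiating the identity $D_xc(x,T_u(x))=\nabla u(x)$ and inverting the mixed Hessian via (NonDeg) shows that $T_u$ is differentiable $\mu$-a.e.\ with $\nabla T_u(x)=(c_{i,j})^{-1}\bigl(D^2u(x)-D^2_{xx}c\bigr)$. Because $(T_u)_\sharp\mu=\nu$ with $d\mu=\rho_X\,dx$ and $d\nu=\rho_Y\,dy$ for positive smooth densities $\rho_X,\rho_Y$, and $T_u$ is injective $\mu$-a.e., the area formula applied to $T_u$ gives $\int\zeta(T_u(x))\,|\det\nabla T_u(x)|\,\rho_Y(T_u(x))\,dx=\int\zeta(T_u(x))\,\rho_X(x)\,dx$ for every bounded measurable $\zeta$; localizing yields $|\det\nabla T_u(x)|=\rho_X(x)/\rho_Y(T_u(x))$ for $\mu$-a.e.\ $x$, which is (\ref{Monge Ampere}).

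I expect the main obstacle to be exactly this last regularity chain: upgrading $\mu$-a.e.\ differentiability from $\nabla u$ to $T_u$, and justifying the change-of-variables (area) formula for a map that is only almost-everywhere differentiable and almost-everywhere injective. This is where the $C^4$-regularity of $c$, Aleksandrov's second-differentiability theorem for semiconcave functions, and the (NonDeg) hypothesis are genuinely used. By contrast, the construction of $T_u$ from duality together with (Twist) is comparatively soft, and once it is in place the existence and uniqueness assertions follow with little extra work.
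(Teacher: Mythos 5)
Your proposal is correct, but note that the paper does not prove this statement at all: it is quoted as background and attributed to Gangbo--McCann (building on Brenier), so the ``paper's proof'' is the classical one in the cited literature. Your route --- weak compactness for existence, Kantorovich duality and $c$-concavity, the first-order condition plus (Twist) to obtain $T_u = c\textrm{-}\exp_x(\nabla u)$ and uniqueness, the symmetric argument for a.e.\ injectivity, and Aleksandrov second differentiability with the area formula and (NonDeg) for the Monge--Amp\`ere equation --- is essentially that standard argument, and you correctly flag the only genuinely delicate step (approximate differentiability of $T_u$ and the change-of-variables formula for an a.e.\ differentiable, a.e.\ injective map).
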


In other words, the optimal transport is solved by a \textit{transport map}, which sends each point in $X$ to a unique point in $Y$. Furthermore, we can solve for this map by solving a fully non-linear equation of Monge-Amp\`ere type. In this case, it is of interest to determine the continuity properties of $T$, which is to ask whether nearby points in $X$ are sent to nearby points in $Y$. Even for smooth costs and measures, a priori this potential is merely Lipschitz\footnote{A deep theorem of De Phillipis and Figalli \cite{DPF2} shows that for smooth costs and measures, the transport is smooth away from a singular set of measure zero.}, and so the transport map may be discontinuous. Determining the regularity of $T$ is known as the \textit{regularity problem of optimal transport}, and is an active area of research.

In the years after Brenier's initial work, much of the focus for the regularity problem was for the cost function $c(x,y) = |x-y|^2$, in which case the Monge-Ampere equation \ref{Monge Ampere} takes a simple form. For this cost function, Caffarelli and others established a priori interior $C^2$-estimates for weak solutions to (\ref{Monge Ampere}) when
\begin{enumerate}
    \item  $X$ and $Y$ are strictly convex domains, and
    \item the associated densities $d \mu$ and $d \nu$ are bounded away from $0$ and $\infty$.
\end{enumerate}
 These estimates imply a $C^1$-estimate for $T$ (and also higher-order estimates using elliptic bootstrapping for the linearized operator). However, this work did not address the regularity for more general cost functions.

 In 2005, breakthrough work of Ma, Trudinger and Wang found two structural conditions, one on the cost function and another on the domains $X$ and $Y$ that are sufficient\footnote{Loeper \cite{Loeper} showed that these conditions are essentially necessary.} to prove regularity for the optimal transport.
We will discuss the condition on $X$ and $Y$ in Section \ref{SyntheticCurvature}, but
for now, we will focus on the condition on the cost function, which is that the so-called \textit{MTW tensor} is non-negative. 
In order to define the MTW tensor, we first introduce some notation. In the following, we use $c_{I,J}$ to denote $\frac{ \partial^{| I |}}{\partial x^I} \frac{ \partial^{| J |} c}{\partial y^J}$  for multi-indices $I$ and $J$ and $c^{i,j}$ to denote the matrix inverse of the mixed Hessian $c_{i,j}$. Using this notation, for a $C^4$ cost function which satisfies (NonDeg), the MTW tensor (denoted $\mathfrak{S}$) is defined as follows:
\begin{equation} \label{MTWtensor}
 \mathfrak{S} (\xi, \eta)= \sum_{i,j,k,l,p,q,r,s} (c_{ij,p}c^{p,q}c_{q,rs}-c_{ij,rs})c^{r,k}c^{s,l} \xi^i \xi^j \eta^k \eta^l
\end{equation}
In this formula, $\eta$ is a vector and $\xi$ is a covector. The MTW tensor is a fourth-order quantity which scales quadratically in $\xi$ and $\eta$. Although it is not immediately obvious, this expression transforms tensorially under change of coordinates.

\begin{definition*}
A $C^4$ cost function with invertible mixed Hessian satisfies:
\begin{enumerate}
\item (MTW)  If $ \mathfrak{S}(\xi,\eta) \geq 0$ for all vector-covector pairs satisfying $\eta(\xi)=0$. Such cost functions are also said to be \textit{weakly-regular}.
\item (MTW($\kappa)$) If $ \mathfrak{S}(\xi,\eta) \geq \kappa | \xi |^2 | \eta |^2 $ for all vector-covector pairs satisfying $\eta(\xi)=0$. This is also known as strong MTW non-negativity.
\item (NNCC)  If $ \mathfrak{S}(\xi,\eta) \geq 0$ for all vector-covector pairs, not necessarily orthogonal. This condition is also known as non-negative cost curvature.
\end{enumerate}
\end{definition*}

As mentioned previously, to prove regularity for optimal transport, it is necessary to prove a priori estimates for equations of the form (\ref{Monge Ampere}). A full overview of this line of research would take us too far from the main focus of this paper. However, to motivate our considerations, we present one example of such a regularity result, proven by Figalli, Kim, and McCann \cite{FKM2}.

\begin{theorem*}[\cite{FKM2}, Theorem 2.1] \label{Holderestimate}

Let $X$ and $Y$ be two domains in $\mathbb{R}^n$ and let $c$ be a cost function $c:X \times Y \to \mathbb{R}$. Consider two probability densities $f(x)$ and $g(y)$ supported on $X$ and $Y$ and suppose that the following conditions hold:

\begin{enumerate}
\item The cost function $c$ is of class $C^4$ with $\| c \|_{C^4(X \times Y)} < \infty$
\item The cost function satisfies (Twist) and (NonDeg)
\item The density $f$ is bounded from above on $X$ and the density $g$ bounded away from both zero and infinity on $Y$.
\item The domains $X$ and $Y$ are uniformly \textit{relatively $c$-convex}. (See Definition \ref{relcconvex})
\item The cost function satisfies (MTW).
\end{enumerate}
Then the optimal transport from $f$ to $g$ is induced by a map $T \in C^{\alpha} (\overline{X^\prime})$ $X^\prime \subset X$ is an open set with $f$ bounded uniformly away from zero.
\end{theorem*}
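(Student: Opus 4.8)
The plan is to reduce the theorem to a quantitative decay estimate for the sections of the Kantorovich potential and then to run a Caffarelli-type iteration, following the circle of ideas of Loeper, Liu--Trudinger--Wang and Figalli--Kim--McCann. By the Gangbo--McCann theorem quoted above, hypotheses (1)--(2) already force the optimal coupling to be induced by a map $T(x) = c\textrm{-}\exp_x(\nabla u(x))$ for some $c$-convex potential $u:X \to \mathbb{R}$, and (Twist) together with (NonDeg) make $c\textrm{-}\exp$ a local diffeomorphism onto its image; so it suffices to prove $\nabla u \in C^{\alpha}_{\mathrm{loc}}$ on the interior set $X'$. For an interior point $x_0$ I would set $y_0 = T(x_0)$ and consider the section
\[
 S_\delta(x_0) = \{\, x \in X : u(x) < u(x_0) + c(x_0,y_0) - c(x,y_0) + \delta \,\},
\]
and aim to show $\operatorname{diam} S_\delta(x_0) \le C\,\delta^{\beta}$ for some $\beta = \beta(n) \in (0,1)$ with $C$ uniform over $X'$, since this classically upgrades to a Hölder modulus for $T$ and, via strict $c$-convexity of $u$, to $\mu$-a.e.\ injectivity.

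\textbf{Using (MTW).} The key structural input is Loeper's maximum principle, which is the geometric manifestation of (MTW): when $c$ is $C^4$, satisfies (NonDeg) and (MTW), and the target is $c$-convex, every $c$-convex function restricted to a $c$-segment attains its maximum at an endpoint. I would use two consequences. First, up to composition with $c\textrm{-}\exp$ the sections $S_\delta(x_0)$ are comparable to convex sets, so after a John-ellipsoid normalization they are comparable to ellipsoids. Second, $u$ can be sandwiched from above and below between ``cost-cones'' --- functions of the form $x\mapsto -c(x,y)+\mathrm{const}$ --- sharing the contact data of $u$ at $x_0$. Feeding this into the Monge--Amp\`ere equation $|\det \nabla T| = f/(g\circ T)$, and using the upper bound on $f$, the two-sided bound on $g$ (hypothesis (3)), and the lower bound on $f$ valid on $X'$, yields a two-sided Alexandrov-type estimate controlling the height $\delta$ of a section in terms of its volume and its shape.

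\textbf{The iteration.} The $c$-convexity of sections together with the uniform relative $c$-convexity of $X$ and $Y$ in hypothesis (4) --- which prevents a section from being truncated by the boundary in a way that would ruin the geometric estimates --- gives an engulfing property: halving the height of a section multiplies its mass by a factor bounded strictly below $1$. Iterating gives $\mu(S_{2^{-k}\delta}(x_0)) \le \theta^{k}\,\mu(S_\delta(x_0))$ for some $\theta\in(0,1)$; the lower bound on $f$ on $X'$ then turns this into geometric decay of the volume $|S_{2^{-k}\delta}(x_0)|$, hence of $\operatorname{diam} S_{2^{-k}\delta}(x_0)$, with an explicit rate that produces the exponent $\alpha$. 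Transferring the estimate back through $c\textrm{-}\exp$ and undoing the normalization gives the claimed bound $T \in C^{\alpha}(\overline{X'})$, with constants uniform over $x_0 \in X'$.

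\textbf{Main obstacle.} The crux is the Alexandrov-type estimate for a general $C^4$ cost. The classical argument for the cost $\|x-y\|^2$ rests on affine invariance of the Monge--Amp\`ere operator and on the convexity of sublevel sets of convex functions, and neither survives verbatim for a general $c$; one must quantify how far a $c$-section deviates from a genuine convex set at scale $\delta$, and this is precisely where $\|c\|_{C^4}<\infty$ and the sign of the MTW tensor enter, controlling the ``curvature'' of the cost-cones so that the deviations are lower-order in the iteration. A secondary, more technical difficulty is uniformity near $\partial X'$: keeping every constant independent of $x_0$ as it approaches the boundary is exactly what forces the restriction to $X'$, where $f$ is bounded away from $0$, together with the hypothesis of uniform relative $c$-convexity of the domains.
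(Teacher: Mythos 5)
You should first note that the paper does not prove this statement at all: it is quoted verbatim (as Theorem 2.1 of Figalli--Kim--McCann \cite{FKM2}) purely as background motivation for studying (MTW) costs, so there is no in-paper argument to compare yours against; any assessment has to be against the actual proof in \cite{FKM2}. Measured against that, your outline does capture the genuine architecture of their argument --- reduction to H\"older decay of sections of the $c$-convex potential, Loeper's maximum principle as the geometric incarnation of (MTW), comparability of sections to convex sets after composing with $c\textrm{-}\exp$, Alexandrov-type volume/height estimates fed by the Monge--Amp\`ere relation $|\det \nabla T| = f/(g\circ T)$, relative $c$-convexity of the domains to protect the section geometry near $\partial X$, and an iteration yielding geometric decay and hence the exponent $\alpha$ --- so as a roadmap it is faithful.

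However, as a proof it has a genuine gap, and you have in effect flagged it yourself: the two-sided Alexandrov estimate for $c$-sections (and the ensuing strict $c$-convexity, injectivity, and engulfing statements) is not a routine adaptation of the quadratic-cost case but is the main technical content of \cite{FKM2}, occupying the bulk of that paper. Your sketch asserts that the deviations of a $c$-section from a convex set are ``lower-order in the iteration'' because $\|c\|_{C^4}<\infty$ and (MTW) hold, but this is precisely what must be proved: one needs quantitative versions of Loeper's maximum principle (their ``quantitative quasiconvexity''-type estimates), a careful renormalization replacing affine invariance, and a separate argument that sections do not degenerate near the boundary using uniform relative $c$-convexity. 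Without carrying out these estimates the iteration in your third paragraph cannot be started, so the proposal is a correct strategy outline rather than a proof. For the purposes of this paper, the honest course is to cite \cite{FKM2} (as the authors do) rather than to claim an independent derivation.
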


\subsection{The Anti-Bisectional Curvature and the MTW tensor.}

To relate the MTW tensor to the curvature of tube domains, we specialize our attention to cost functions of the form $c(x,y) = \Psi(x-y)$ for some strongly convex function $\Psi:\Omega \to \mathbb{R}$ (henceforth \textit{ $\Psi$-costs}). Such cost functions were first studied by Gangbo and McCann \cite{GangboMcCann}, although they did not use this terminology.

For a $C^4$ $\Psi$-cost, the MTW tensor is proportional to the orthogonal anti-bisectional curvature of the associated K\"ahler Sasaki metric (Theorem 6 of \cite{KhanZhang}).  As a result of this, a $\Psi$-cost satisfies (MTW) iff the associated K\"ahler Sasaki metric satisfies (NOAB). Furthermore, the cost-curvature is proportional to the anti-bisectional curvature, so (NNCC) for a $\Psi$-cost corresponds to (NAB) for the associated K\"ahler Sasaki metric.

From this observation, we can use our examples to generate $\Psi$-costs with (MTW). Furthermore, these costs will satisfy a growth condition at infinity, which corresponds to completeness for the K\"ahler Sasaki metric (or equivalently completeness of the underlying Hessian manifold). As a shorthand for this, we say that a $\Psi$-cost is \textit{complete} if the associated Hessian manifold is complete as a Riemannian manifold.

\begin{example}
The cost function $c(x,y)= \|x-y\| - C \log(\|x-y\|+C )$ is a complete cost function which satisfies (MTW).
\end{example}
This is the cost function corresponding to Example \ref{Example1} (after integrating out to solve for $\phi$). Unfortunately, it is not possible to write out a closed form cost associated with Example \ref{ComplicatedExample}, as $r\cdot f$ does not have elementary anti-derivative.

\section{Synthetic Notions of Curvature in Complex Geometry and Optimal Transport} \label{SyntheticCurvature}

In this section, we discuss ways to define synthetic curvature bounds in K\"ahler geometry. We use the term ``synthetic curvature" in the sense of defining curvature for low-regularity metric spaces, which may not be smooth enough for the Riemann curvature tensor to be defined. This notion has also been called ``coarse curvature" (see, e.g., \cite{AcheWarren}), but we will not use this terminology. Although the concept is perhaps best understood by analogy, we will use the following definition for \textit{synthetic curvature bounds}.

\begin{definition*}[Synthetic curvature bounds]
A condition $Q_\kappa$ is a synthetic lower bound for a curvature tensor $S$ if the following two conditions hold:
\begin{enumerate}
\item On a smooth manifold $M$ where $S$ is defined, \[ S \geq \kappa \iff Q_\kappa. \]
\item The condition $Q_\kappa$ is well-defined for spaces with low regularity (where $S$ is not well-defined).
\end{enumerate}
\end{definition*}

One can define synthetic curvature upper bounds analogously. Note that we have purposely left the condition $Q_\kappa$ and the curvature tensor $S$ ambiguous, so as to make this definition as general as possible. Depending on the context, $S$ might be the sectional curvature, Ricci curvature, scalar curvature, or any other sort of curvature. The main goal of this section is to define synthetic versions of (NOAB) and (NAB) on tube domains. However, it is instructive to first consider several examples of synthetic curvature.

 To motivate the definition of a synthetic curvature bound, it is worth considering the CAT($\kappa$)-inequality, which is the prototypical example.
 
 \begin{theorem*} [CAT($\kappa$)-inequality]
 Suppose $M$ is a Riemannian manifold with sectional curvature $S$ satisfying $S \geq \kappa$. Denote the distance function on $M$ by $d$. Let $\triangle pqr$ be a geodesic triangle in $M$ (i.e. a triangle whose sides are geodesics) such that 
 \begin{enumerate}
     \item the sides $\overline{pq}, \overline{pr}$ and $\overline{qr}$ are minimal, and
     \item if $\kappa>0$, all of the sides have length at most $\frac{\pi}{\sqrt{ \kappa}}$.
 \end{enumerate}  
 
 For comparison, let $M_\kappa$ be a simply connected space of constant curvature $\kappa$ and consider $\triangle p'q'r'$ a geodesic triangle in $M_\kappa$ with
 \begin{enumerate}
     \item $\textrm{length}(\overline{pq}) = \textrm{length}(\overline{p'q'})$,
     \item $\textrm{length}(\overline{pr}) = \textrm{length}(\overline{p'r'})$, and
     \item $\textrm{length}(\overline{qr}) = \textrm{length}(\overline{q'r'})$.
 \end{enumerate}
 
 For any pair of points $ (x,y) \in \overline{pq} \times \overline{pr} \subset M \times N$, consider the pair $(x',y') \in \overline{p'q'} \times \overline{p'r'} \subset M_\kappa \times M_\kappa$
  satisfying $d(p,x)= d'(p',x')$ and $d(p,y)= d'(p',y')$.
  
  Then the following inequality holds.
 \begin{equation} \label{narrowtriangle}
     d(x,y) \leq d'(x',y').
 \end{equation}
 \end{theorem*}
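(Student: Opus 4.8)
The plan is to recognize this as Toponogov's comparison theorem for a lower sectional curvature bound: the hypotheses here --- $S\ge\kappa$, minimality of the three sides of $\triangle pqr$, and (when $\kappa>0$) the bound $\pi/\sqrt{\kappa}$ on their lengths --- are exactly those under which the comparison holds for an arbitrarily large triangle in a (necessarily complete) $M$, with no topological hypothesis needed. I would argue in three steps: (i) reduce the form stated here --- two points, one on each of the two sides through $p$ --- to the standard ``point on one side versus the opposite vertex'' form of the comparison; (ii) prove that standard form for triangles small enough to lie inside one totally normal ball, using Jacobi-field (Rauch) estimates; and (iii) remove the smallness restriction by subdividing $\triangle pqr$ into thin triangles and reassembling the model comparison triangles with Alexandrov's lemma. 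The globalization in (iii) is the genuine obstacle; once the right tools are named, (i) and (ii) are essentially bookkeeping.

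For the reduction, write $s=d(p,x)$ and $t=d(p,y)$, and note that $\overline{px}\subset\overline{pq}$ and $\overline{py}\subset\overline{pr}$ are minimal. Applying the point-versus-opposite-side comparison inside $\triangle pqr$ to $y\in\overline{pr}$ (whose opposite vertex is $q$) gives $d(q,y)\ge d'(q',y')$; applying it again inside $\triangle pqy$ to $x\in\overline{pq}$ (opposite vertex $y$) gives $d(x,y)\ge d'(\widehat x,\widehat y)$, where in the $M_\kappa$-comparison triangle of $\triangle pqy$ the vertex $\widehat y$ lies at distance $t$ from the apex and $\widehat x$ at distance $s$ along the $|pq|$-side. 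Finally, the triangle $\triangle p'q'y'$ sitting inside the comparison triangle of $\triangle pqr$ and the comparison triangle of $\triangle pqy$ share the two side lengths $|pq|$ and $t$, while $|q'y'|=d'(q',y')\le d(q,y)=|qy|$; the monotonicity of the law of cosines in $M_\kappa$ then makes the angle at $p'$ no larger than the corresponding angle in the latter, so $d'(\widehat x,\widehat y)\ge d'(x',y')$. Chaining these inequalities gives the comparison $d(x,y)\ge d'(x',y')$ --- the Toponogov ``fat triangle'' statement. (When $\kappa>0$, Bonnet--Myers bounds $\operatorname{diam}M\le\pi/\sqrt{\kappa}$, which keeps the auxiliary geodesics in the range where the model comparison triangles exist.)

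For the local comparison I would use the Rauch comparison theorem (equivalently, the second variation of energy). Along a unit-speed minimal geodesic $\gamma$ from $p$, the infinitesimal spreading of the family of minimal geodesics out of $p$ is carried by a Jacobi field $J$ with $J(0)=0$, and the Jacobi equation $J''+R(J,\dot\gamma)\dot\gamma=0$ together with $S\ge\kappa$ gives, by Sturm comparison with $j''+\kappa j=0$, that $|J(t)|\le j(t)$ up to the first model conjugate point; equivalently $\Hess\,d(p,\cdot)\le\Hess\,d'(p',\cdot)$ wherever the left-hand side is smooth. Integrating this Hessian inequality along the side opposite $p$, by a one-dimensional comparison principle for the resulting Riccati-type inequality, yields $d(p,z)\ge d'(p',z')$ for any triangle contained in a totally normal ball.

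For the globalization --- the crux of the proof --- I would subdivide $\triangle pqr$ by joining $q$ to a fine chain of points along $\overline{pr}$ with minimal geodesics, producing a fan of thin triangles each small enough for step (ii), and then build the global comparison triangle one piece at a time via Alexandrov's lemma: gluing two $M_\kappa$-comparison triangles along a shared side and straightening the common vertex does not decrease the relevant angle, equivalently does not increase the opposite side. An induction on the number of pieces (equivalently, a continuity argument over the subdivisions for which the comparison is already known) then promotes step (ii) to the global point-versus-opposite-side inequality for $\triangle pqr$, and feeding this back into step (i) completes the proof. I expect the hard part to be the bookkeeping in this reassembly --- tracking how the angles of the pieces fit together and keeping every auxiliary geodesic minimal, which is where minimality of the original sides (and, for $\kappa>0$, the length bound via Bonnet--Myers) is indispensable --- and this is exactly where Toponogov's original argument is delicate.
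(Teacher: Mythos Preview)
The paper does not prove this statement at all; it is quoted without proof as the ``prototypical example'' of a synthetic curvature bound, purely to motivate the later discussion. So there is no proof in the paper to compare against.

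That said, there is a real discrepancy you should flag rather than paper over. Your argument is a correct outline of Toponogov's comparison theorem, and at the end of your reduction you arrive (correctly, under the hypothesis $S\ge\kappa$) at the \emph{fat}-triangle inequality $d(x,y)\ge d'(x',y')$. The stated conclusion in the paper, however, is the \emph{thin}-triangle inequality $d(x,y)\le d'(x',y')$. These are opposite inequalities. The name ``CAT($\kappa$)'' and the conclusion $d\le d'$ go with an \emph{upper} sectional-curvature bound $S\le\kappa$, while the hypothesis $S\ge\kappa$ and your argument go with Toponogov and $d\ge d'$. As written, the paper's statement mixes the two, and your proof silently switches to the inequality that is actually true under the stated hypothesis. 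You should say explicitly which of the two you are proving and note that the statement as printed is inconsistent; otherwise a reader sees you asserting ``$d(x,y)\ge d'(x',y')$'' as the conclusion of a theorem whose displayed line reads ``$d(x,y)\le d'(x',y')$''.

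Modulo that sign issue, your three-step plan (reduce the two-point form to the hinge/point-vs-opposite-vertex form; prove the local version via Rauch/Jacobi-field estimates; globalize by subdividing and reassembling with Alexandrov's lemma) is the standard route to Toponogov and is sound as an outline. One caveat: in step (iii) you also need to ensure that the auxiliary geodesics joining $q$ to the subdivision points on $\overline{pr}$ are themselves minimal, which typically requires either completeness of $M$ or working inside a convex ball; you invoke Bonnet--Myers for $\kappa>0$, but for $\kappa\le 0$ you should state the completeness assumption explicitly, since the paper's statement does not.
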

 
 In fact, this result \textit{characterizes} sectional curvature bounds, in that whenever a Riemannian manifold has some sectional curvature smaller than $\kappa$, it is possible to find a small geodesic triangle where inequality (\ref{narrowtriangle}) fails. Furthermore, if we use Inequality \ref{narrowtriangle} as the \textit{definition} for sectional curvature bounds, this has the additional advantage in that it is well-defined on spaces which are not smooth manifolds. In this vein, a complete geodesic space which satisfies the inequality \ref{narrowtriangle} is said to be a $CAT(\kappa)$-spaces \cite{Gromov} and play an important role in metric geometry and geometric group theory.  


\subsection{Synthetic Ricci bounds on K\"ahler manifolds}

For a simple though instructive example of this idea in complex geometry, we now discuss a synthetic formulation for Ricci bounds.
On a smooth K\"ahler manifold, the Ricci form is given by the formula
\begin{equation} \label{Ricci}
    \rho = - \sqrt{-1} \partial \bar \partial \log \det \partial \bar \partial \Psi,
\end{equation}
where $\Psi$ is the K\"ahler potential (i.e. The K\"ahler form $\omega$ satisfies $\omega = \partial \bar \partial \Psi$). The Ricci curvature is bounded below (respectively above) by a constant $\kappa$ if
\begin{equation} \label{Riccibound} \rho \geq \kappa \omega \textrm{ (respectively } \leq \kappa \omega \textrm{).} 
\end{equation}

The above inequality should be interpreted in the sense of $(1,1)$-forms. That is to say, given a holomorphic vector $X$, the above inequality implies that $\rho(X, \overline{X}) \geq \kappa \omega( X, \overline{X})$.
To rephrase this in synthetic terms, we consider the function $Q_\kappa = \log \det \partial \bar \partial \Psi + \kappa \Psi$ and say that the Ricci curvature is bounded above (or below) by $\kappa$ whenever
$Q_\kappa$ is plurisubharmonic (plurisuperharmonic)\footnote{This is a slight abuse of notation from our definition of synthetic curvature bounds, where $Q_\kappa$ was a condition, instead of a function, but this is not important.}.
When $\Psi$ is $C^4$, this is equivalent to Ricci bounds in the normal sense. However, this definition does not require $Q_\kappa$ to be $C^2$, so we are able to define Ricci curvature bounds when the potential is only $C^3$ (which is the natural regularity so that the K\"ahler condition $d \omega =0$ is well-defined).

For K\"ahler Sasaki metrics on tube domains, we can simplify this further. For these metrics, the Ricci form simplies to
\[ \rho_{i\bar{j}} = \frac{\partial^2}{\partial x^i \partial x^j} \log \det \left[ \frac{\partial^2 \Psi}{\partial x^k \partial x^l}\right] . \]

Therefore, if we define 
\begin{equation}
    Q_\kappa = \log \det \left[ \frac{\partial^2 \Psi}{\partial x^k \partial x^l}\right] + \kappa \Psi
\end{equation} we can see that the Ricci curvature of a Sasaki metric is bounded below by $\kappa$ if and only if $Q_\kappa$ is convex (on $\Omega$). This immediately implies the following proposition.

\begin{proposition}
A $C^4$ K\"ahler Sasaki metric has Ricci curvature bounded below by $\kappa$ iff the function $Q_\kappa$ satisfies
\[\lambda Q_\kappa(p_1)+ (1-\lambda) Q_\kappa(p_2) \geq Q_\kappa(\lambda p_1 + (1-\lambda)p_2) \]
for all $p_1, p_2 \in \Omega$ and $0< \lambda <1$
\end{proposition}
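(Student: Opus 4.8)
The plan is to unwind both sides of the claimed equivalence to the same elementary statement, namely that a $C^4$ function on a convex domain is convex if and only if it satisfies the two-point (secant) inequality. First I would recall from the immediately preceding discussion that for a $C^4$ K\"ahler Sasaki metric the Ricci form has components $\rho_{i\bar j} = \partial_i\partial_j \log\det[\Psi_{kl}]$, which are precisely the second partials of the scalar function $\log\det[\Psi_{kl}]$ in the $x$-coordinates. Hence $\rho \geq \kappa\omega$ as $(1,1)$-forms is equivalent to the matrix inequality $\big(\partial_i\partial_j \log\det[\Psi_{kl}]\big) \geq \kappa\,(\Psi_{ij})$ at every point, which (after subtracting and using $\Psi_{ij} = \partial_i\partial_j\Psi$) is the same as saying the Hessian $D^2 Q_\kappa = D^2\big(\log\det[\Psi_{kl}] + \kappa\Psi\big)$ is positive semidefinite everywhere.

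Next I would invoke the standard fact that a $C^2$ function $Q$ on a convex open set $\Omega\subseteq\mathbb{R}^n$ has $D^2 Q \geq 0$ everywhere if and only if $Q$ is convex, i.e. $\lambda Q(p_1) + (1-\lambda)Q(p_2) \geq Q(\lambda p_1 + (1-\lambda)p_2)$ for all $p_1,p_2\in\Omega$ and $0<\lambda<1$. For one direction: restricting $Q_\kappa$ to the segment joining $p_1$ and $p_2$ gives a one-variable $C^2$ function whose second derivative is $\langle D^2 Q_\kappa\,(p_2-p_1),\, p_2-p_1\rangle \geq 0$, so it is convex and the secant inequality follows. For the converse: if $D^2 Q_\kappa$ fails to be positive semidefinite at some point $p$ in some direction $v$, then by continuity $\langle D^2 Q_\kappa(q)v,v\rangle < 0$ for $q$ near $p$, and restricting to a short segment through $p$ in direction $v$ produces a strictly concave one-variable function, violating the two-point inequality for nearby $p_1,p_2$. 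Chaining these equivalences — $\rho\geq\kappa\omega \iff D^2 Q_\kappa \geq 0 \iff Q_\kappa$ convex $\iff$ the displayed secant inequality — gives the proposition.

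The only genuinely substantive point, and the one I would state carefully rather than treat as routine, is the reduction of the $(1,1)$-form inequality $\rho\geq\kappa\omega$ to the real symmetric-matrix inequality $D^2 Q_\kappa \geq 0$: one must observe that for a real function $u(x)$ the associated real Hessian $(\partial_i\partial_j u)$ and the complex Hessian contribution to a tube-domain metric agree up to the identification $\partial/\partial z_i = \tfrac12(\partial/\partial x_i - \sqrt{-1}\,\partial/\partial y_i)$ used earlier in the paper, so that positivity of one is equivalent to positivity of the other. Since $\Psi$ and $\log\det[\Psi_{kl}]$ are both independent of $y$, evaluating on a $(1,0)$-vector $X=\sum_i \xi_i\,\partial/\partial z_i$ reduces everything to $\sum_{i,j}\xi_i\bar\xi_j\,(\partial_i\partial_j Q_\kappa)$, and because $(\partial_i\partial_j Q_\kappa)$ is a real symmetric matrix, its being positive semidefinite against all complex vectors is the same as against all real vectors. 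Once this is in place the remaining steps are the textbook characterization of convexity, so I would present them briefly. The hypothesis $\Psi\in C^4$ ensures $Q_\kappa\in C^2$, which is exactly what the convexity criterion needs; I would note in passing that (unlike the genuinely synthetic statements later in the section) here $C^2$-regularity of $Q_\kappa$ is available, so no limiting or mollification argument is required.
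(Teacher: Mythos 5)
Your proposal is correct and takes essentially the same route as the paper: the paper treats the proposition as immediate from its displayed tube-domain formula $\rho_{i\bar j}=\partial_{x^i}\partial_{x^j}\log\det[\Psi_{kl}]$ together with the standard equivalence, for a $C^2$ function, between a positive semidefinite Hessian and the two-point secant inequality, and you simply write out those routine steps (including the identification of the complex Hessian of a $y$-independent function with $\tfrac14$ of its real Hessian, where positivity against complex vectors reduces to positivity against real ones). The only caveat is one you inherit from the paper itself: under the convention $\rho=-\sqrt{-1}\,\partial\bar\partial\log\det\partial\bar\partial\Psi$ stated just above, the tube-domain Ricci form carries a minus sign, so a lower Ricci bound would correspond to concavity of $Q_\kappa$; since you and the paper both use the paper's displayed sign-free formula, your argument matches the intended proof.
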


As such, convexity (or concavity) of $Q_\kappa$ gives a way to \textit{define} bounds on the Ricci curvature. For $C^4$ potentials, this is equivalent to Ricci curvature bounds in the usual sense, but has the advantage of being defined for less smooth potentials.  It is worth noting that there are many other ways of defining synthetic bounds for Ricci curvature, several of which can be defined for less regular metric-measure spaces. We refer to the work of Villani \cite{VillaniSynth} and Ache and Warren \cite{AcheWarren} for some references on this topic.


\subsection{A synthetic (MTW) derived from K\"ahler-Ricci flow}

In recent work, the first and third named authors showed that in two (complex) dimensions, K\"ahler-Ricci flow preserves (NOAB) \cite{KhanZheng}. In other words, given an initial metric which satisfies the condition, all future metrics do as well. Since this flow will instantaneously smooth rough metrics, this motivated the following synthetic version of (MTW).

\begin{definition}[KR weak regularity] \label{KRWeakregularity}
A cost function $c(x,y)=\Psi_0(x-y)$ is KR weakly regular if there exists a solution to the parabolic flow
\begin{equation} \label{Psicostflow}
  \begin{cases}  \frac{ \partial} {\partial t} \Psi(x,t) = 2 \log \left( \det[ \Hess \Psi(x,t)]  \right)\\
 \Psi(x,0) = \Psi_0(x) \end{cases} 
\end{equation} 
 which has (NOAB) for positive time.
\end{definition}

The flow \ref{Psicostflow} induces K\"ahler-Ricci flow on the K\"ahler Sasaki metric, which is the root of the term KR weak regularity. For readers familiar with the regularity theory of optimal transport, note that $\log \left( \det[ \Hess \Psi_0(x)]  \right)$ gives a quantitative measure of how non-degenerate the cost $c(x,y)= \Psi_0(x-y)$ is, so this flow has an interpretation in terms of optimal transport as well as complex geometry. We will not study the existence theory in this paper, but rather use this flow to motivate our study of the regularity of the Monge cost   $c(x,y)=\|x-y\|$.

\subsection{The Monge cost}
\label{Monge cost and KR flow}

At present, there is a well-established theory for the regularity of optimal transport for costs which satisfy the various hypotheses of the Gangbo-McCann and Ma-Trudinger-Wang theorems. There are natural geometric interpretations for each of the assumptions in these results which explains their role clearly. However, Monge's original cost function $c(x,y)=\|x-y\|$ does not satisfy (NonDeg) or (Twist), and its MTW tensor is undefined (since one cannot invert the mixed Hessian). As might be expected, optimal transport with respect to the cost can be fairly pathological. Transport plans are generally non-unique and need not be induced by a transport map. Nonetheless, it seems possible to recover a partial regularity theory, which is a longstanding question in optimal transport.

 For smooth enough measures, the optimal transport plan for the Monge cost will occur along disjoint line segments (called transport rays). However, one can rearrange the transport on each ray to get another plan with the same total cost. As such, the optimal solution is non-unique and may be non-deterministic (i.e., not induced by a transport map). However, there is a natural assumption which resolves these issues; that the transport is monotonic along transport rays. Doing so, there is a unique solution which is induced by a transport map, known as the \emph{ray monotone solution}. 

For measures in $\mathbb{R}^2$, the monotone optimal mapping will be continuous in the interior of the transfer set (i.e., the union of all transfer rays) when the densities are positive, continuous, and have compact, convex and disjoint supports \cite{FragalaGelliPratelli} (see also, \cite{LiSantWang2}). On the other hand, even among smooth measures with convex supports, it is possible to find examples where the ray monotone solution fails to be Lipschitz \cite{LiSantWang}. As such, there are many questions about the Monge cost which remain unsolved.

 Monge's potential $\Psi_0(r) = r$ is highly singular so the flow defined by \ref{Psicostflow} is not well-defined at time $t=0$. However, using Example \ref{Example1} and taking formal limits as $C$ goes to zero, we can define a new potential obtained by deforming Monge's cost in a canonical way. This deformation which satisfies (MTW) far enough away from the origin.

We start by computing the Ricci potential $\rho(r)$ for Example \ref{Example1}. As the metric is rotationally symmetric, this reduces to computing \[ \rho(r) = \log \left( \det[ \Hess \Psi(x,t)] \right). \]
Using Mathematica, we find the following:
\begin{equation} \rho(r) =
\log \left[\frac{C}{\left(C+r\right)^3}\right].
\end{equation}

It is not possible to take the limit of this potential as $C$ goes to $0$, as it goes to $-\infty$ for $r\neq 0$. Instead, we consider the renormalized Ricci potential $\widetilde \rho(r)$, defined as 
\begin{equation} \tilde \rho(r) = \rho(r)-\log(C) =
\log \left[\frac{1}{\left(C+r\right)^3}\right].
\end{equation}

From the perspective of the parabolic flow \ref{Psicostflow}, renormalizing the Ricci potential acts to slow the flow but does not change the geometry of the solutions otherwise. Now we can take the limit of this potential as $C$ goes to $0$. Doing so, we obtain that
\begin{equation}
    \widetilde \rho_0(r) = -3\log(r).
\end{equation}

In a formal sense, this quantity can be understood as the ``renormalized Ricci potential" associated to the Monge potential $\Psi(r) = r$. Using this quantity, we can construct a formal first-order Taylor series of Equation \ref{Psicostflow}. Doing so, we obtain the potentials
\begin{equation}\label{Monge flow}
    \Psi_{Monge}(r,t)= r- 3t\log(r).
\end{equation}

In the region where $r>3t$, this potential is strictly convex, so defines a metric. Furthermore, in dimension $2$, the associated costs satisfy (MTW) when $r$ is sufficiently large compared to $t$ (for instance, $r> \frac{26}{3}t$ suffices). As such, this gives a one-parameter family of cost functions with good regularity properties far enough from the origin.
 
At this point, the potential $\Psi_{Monge}(r,t)$ might appear somewhat arbitrary, in that there were several seemingly ad hoc choices in its construction. In fact, this potential is not arbitrary, but will appear as fairly universal limit, modulo a gauge transformation. For instance, if one instead uses the deformation $\Psi_\epsilon = \sqrt{\epsilon + r^2}$ instead of Example \ref{Example1}, the renormalized Ricci potential is \begin{equation}
    \widetilde \rho_0(r) = -2\log(r),
\end{equation}
which differs from the original by a multiplicative factor alone. And thus the one-parameter family just differs from the original by a rescaling of the time parameter.\footnote{For other deformations, the same phenomena occurs, although the factor used to renormalize the Ricci potential depends on the deformation.} Furthermore, the use of the first-order Taylor series in $\Psi_{Monge}$ is not arbitrary. By scaling $r$ to be large, we can renormalize (i.e., slow down) the flow even further to ensure that the first-order approximation is arbitrarily accurate.

At present, we are not able to directly apply this analysis to the regularity problem of the Monge cost. However, it may be the case that the limits of the transport map as $t$ goes to $0$ are reasonably well-controlled. This approach was considered in \cite{LiSantWang} using $c_\epsilon = \sqrt{\epsilon + \|x-y\|^2}$, where the authors established that the eigenvalues of the Jacobian matrix are locally uniformly bounded. Going further, one potential route to establishing regularity would be to show that the transports converge in $C^\alpha$ for $\alpha$ small enough.


\section{Acknowledgements}

The first author would like to thank Jun Kitagawa for his helpful comments and discussions.

\newpage

\appendix
\section{Derivation of Example \ref{ComplicatedExample}}

\label{Example2}

In this section, we prove that Example \ref{ComplicatedExample} on page 13 has non-negative anti-bisectional curvature.

Recall that in this example, we set
$$ \ell = r + \frac{1}{L^2}, \ \ \ L = \log (c+r) \textrm{ for $c \log(c)^3 \geq 2$.} $$

Here, $\ell=\frac{r}{\phi^\prime(r)}$ where $\phi$ is the convex function satisfying $\Psi(x) = \phi( |x$). A tedious but straightforward computation shows the following. Note that in the following $\log[c+r]^k$ denotes $\left( \log[c+r] \right)^k$, rather than  $ \log\left(c+r\right)^k$. We omit the extra parenthesis for brevity.

\[ A = \frac{\log[c + r]^2 (-2 + (c + r) \log[c + r]^3)}{r (2 r + (c + r) \log[c + r]) (1 + r \log[c + r]^2)} \]

\[ B = \frac{\log[c + r] (6 r + 
   2 (c + 2 r) \log[c + r] - (c + r)^2 \log[c + r]^4)}{r (c + 
   r) (2 r + (c + r) \log[c + r]) (1 + r \log[c + r]^2)} \]

\[ C = 
\frac{ \left( \begin{aligned}
      -60 r^3 - 4 r^2 (19 c + 26 r) \log[c + r] - 
   2 r (17 c^2 + 49 c r + 2 r^2 (17 + 3 r)) \log[c + r]^2 \\ -
   2 (3 c^3 + 12 c^2 r + 4 r^3 (2 + 3 r) + c r^2 (17 + 6 r)) \log[c + r]^3 - 2 r^2 (5 c^2 + 17 c r + 14 r^2) \log[c + r]^4 \\
   + 
   2 r (2 c^3 + 3 c^2 r - 2 c r^2 - 3 r^3) \log[c + r]^5 + 
   3 (c + r)^2 (c^2 + 2 c r + r^2 - 4 r^3) \log[c + r]^6 \\
    - 
   2 r^2 (c + r)^2 (c + 3 r) \log[c + r]^7 + 
   2 r (c + r)^4 \log[c + r]^8 + 
   r^2 (c + r)^4 \log[c + r]^{10}
      \end{aligned} \right)}%
  {r (c + r)^3 \log[
    c + r] (2 r + (c + r) \log[c + r]) (1 + r \log[c + r]^2)^3}
\]

\[  D = \frac{\left( \begin{aligned}
-60 r^3 - 4 r^2 (15 c + 22 r) \log[c + r] - 
   2 r (9 c^2 + 29 c r + 2 r^2 (11 + 3 r)) \log[c + r]^2 \\
- 2 (3 c^3 + 8 c^2 r + 3 c (3 - 2 r) r^2 + 4 r^3) \log[c + r]^3 + 
   2 r^2 (15 c^2 + 27 c r + 10 r^2) \log[c + r]^4 \\
 + 2 r (6 c^3 + 21 c^2 r + 22 c r^2 + 7 r^3) \log[c + r]^5 + 
   3 (c + r)^4 \log[c + r]^6
 \end{aligned} \right) }{r (c + r)^3 \log[
    c + r] (2 r + (c + r) \log[c + r]) (1 + r \log[c + r]^2)^3}
\]

Whenever $c \log[c]^3 \geq 2$, we have that $A$ and $A+B$ are greater than 0, so what remains to show is that $D \geq 0$.

We start by consider the denominator of $D$
\[Denom[D]= r (c + r)^3 \log[c + r] (2 r + (c + r) \log[c + r]) (1 + r \log[c + r]^2)^3. \]

This is positive for $r \geq 0$ whenever $c\geq 1$, so the denominator is positive. As such, what remains to show is that the numerator is also positive.

\[ Numer[D] =
\left( \begin{aligned}
-60 r^3 - 4 r^2 (15 c + 22 r) \log[c + r] - 
   2 r (9 c^2 + 29 c r + 2 r^2 (11 + 3 r)) \log[c + r]^2 \\
- 2 (3 c^3 + 8 c^2 r + 3 c (3 - 2 r) r^2 + 4 r^3) \log[c + r]^3 + 
   2 r^2 (15 c^2 + 27 c r + 10 r^2) \log[c + r]^4 \\
 + 2 r (6 c^3 + 21 c^2 r + 22 c r^2 + 7 r^3) \log[c + r]^5 + 
   3 (c + r)^4 \log[c + r]^6
 \end{aligned} \right)
\]

Momentarily treating powers of $\log[c+r]$ as if they were constants, then this appears to be a quartic polynomial in $r$. Arranging the terms in this fashion, we find
\[ Numer[D] = \mathcal{A}_0 + \mathcal{A}_1 r + \mathcal{A}_2 r^2 + \mathcal{A}_3 r^3 + \mathcal{A}_4 r^4, \textrm{ where,}
\]
\begin{eqnarray*}
\mathcal{A}_0 &=&  3 c^3 \log[c+r]^3 (-2 + c \log[c+r]^3), \\
\mathcal{A}_1 &= & -18 c^2 \log[c + r]^2 - 16 c^2 \log[c + r]^3 + 12 c^3 \log[c + r]^5 + 
 12 c^3 \log[c + r]^6, \\
 \mathcal{A}_2 & = & -60 c \log[c + r] - 58 c \log[c + r]^2 - 18 c \log[c + r]^3 \\
 & & +  30 c^2 \log[c + r]^4 + 42 c^2 \log[c + r]^5 + 18 c^2 \log[c + r]^6, \\
 \mathcal{A}_3 &= & -60 - 88 \log[c + r] - 44 \log[c + r]^2 - 8 \log[c + r]^3  \\
 & & + 12 c \log[c + r]^3 + 54 c \log[c + r]^4 + 44 c \log[c + r]^5 + 
 12 c \log[c + r]^6, \\
\mathcal{A}_4 &=& -12 \log[c + r]^2 + 20 \log[c + r]^4 + 14 \log[c + r]^5 + 3 \log[c + r]^6.
\end{eqnarray*}

We will now show that each of these terms are non-negative, which completes the proof that $D \geq 0$.

\begin{enumerate}
    \item [$\mathcal{A}_0 \geq 0$.]
    
   Note that $c^3 \log[c+r]^3 >0$ and we have assumed that $c \log[c]^3 \geq 2$, so both factors are non-negative. As such, $\mathcal{A}_0 \geq 0$.
    \item [$\mathcal{A}_1>0$.]
    
    Divide $\mathcal{A}_1$ by $ c^2 \log[c+r]^2 >0$ to obtain
    \[  \mathcal{A}^\prime_1 = 2 (-9 - 8 \log[c + r] + 6 c \log[c + r]^3 + 6 c \log[c + r]^4). \]
    
    By the assumption that $c \log[c]^3 \geq 2$, we have that 
     \[  \mathcal{A}^\prime_1 \geq 2 (-9 - 8 \log[c + r] + 12 + 12 \log[c + r]) = 3 + 4 \log[c+r] >0. \]
     As such, we have that $\mathcal{A}_1 > 0$.
     \item [$\mathcal{A}_2>0$.]
     
     Simplifying $\mathcal{A}_2$ by dividing out $c \log[c+r]$, we obtain 
     \[ \frac{\mathcal{A}_2}{c \log[c+r]} = 2 \left( \begin{aligned} -30 - 29 \log[c + r] - 9 \log[c + r]^2 + 15 c \log[c + r]^3 \\ + 
   21 c \log[c + r]^4 + 9 c \log[c + r]^5 \end{aligned} \right). \]
   Once again using our assumption on $c$, we have that 
  \begin{eqnarray*}
  \frac{\mathcal{A}_2}{c \log[c+r]} & \geq & 2 \left( - 29 \log[c + r] - 9 \log[c + r]^2 + 
   42 \log[c + r] + 18 \log[c + r]^2\right) \\
   &= &2( 13 \log[c + r] + 9 \log[c + r]^2) >0  
   \end{eqnarray*}
   As such, $\mathcal{A}_2>0$. 
   \item [$\mathcal{A}_3>0$.]
   
   To show that this term is positive, we cannot simply bound each term by below, as we did for the previous terms. Instead, consider $\mathcal{A}_3(r,c)$ as a function of $r$ and $c$. We first show that when $c$ satisfies $c \log[c]^3 \geq 2$, $\mathcal{A}_3(0,c) > 0$. To see this, observe that
   \begin{eqnarray*} \mathcal{A}_3(0,c)   &= & -60 - 88 \log[c] - 44 \log[c]^2 - 8 \log[c]^3 + 12 c \log[c]^3 \\
   &  & +  54 c \log[c]^4 + 44 c \log[c]^5 + 12 c \log[c]^6 \end{eqnarray*}
    
    We estimate this term from below as follows. 
   \begin{eqnarray*}
   \mathcal{A}_3(0,c) &\geq & -60 - 88 \log[c] - 44 \log[c]^2 - 8 \log[c]^3 \\
   & & + 24 +  108 \log[c] + 88 \log[c]^2 + 24 \log[c]^3 \\
    & = & -36 + 20 \log[c] + 44 \log[c]^2 + 16 \log[c]^3
   \end{eqnarray*} 
    
From the fact that $c \log[c]^3 \geq 2,$ we have that $\log[c] > .87$.\footnote{Note that
$.87^3 \exp[.87] < .87^3 e <.87^3 \cdot 3 = 1.975509 < 2, $ which is how we obtain this lower bound. }
Plugging in $.87$ as a lower bound for $\log[c]$ in the previous inequality, we have that 
    \[  \mathcal{A}_3(0,c) > -36 + 20 \cdot .87 + 44 \cdot (.87)^2 + 16 \cdot (.87)^3 = 25.239648 >0.  \] 

 Finally, we show that $(r+c) \frac{\partial}{\partial r} \mathcal{A}_3(r,c) >0$.
         
       \begin{eqnarray*}
       (r+c) \frac{\partial}{\partial r}\mathcal{A}_3(r,c) &= &4 \left( \begin{aligned}-22 - 22 \log[c + r] + (-6 + 9 c) \log[c + r]^2   \\
    +  54 c \log[c + r]^3    + 55 c \log[c + r]^4 + 18 c \log[c + r]^5\end{aligned} \right) \\
       & \geq & 4 \left( \begin{aligned}-22 - 22 \log[c + r] + (-6 + 9 c) \log[c + r]^2  \\
    +  108   + 110 \log[c + r] + 36 \log[c + r]^2 \end{aligned} \right) \\ 
       & = & 4 (86 +88 \log[c + r] + (27 + 9 c) \log[c + r]^2) > 0.
   \end{eqnarray*}
This implies that $\frac{\partial}{\partial r}\mathcal{A}_3(r,c) >0$, which further implies that $\mathcal{A}_3(r,c)> \mathcal{A}_3(0,c)$ for all $r>0$. Since $\mathcal{A}_3(0,c) > 0$, this implies that $\mathcal{A}_3>0$.

    \item [$\mathcal{A}_4>0$.]
   
   Simplifying $\mathcal{A}_4$ by dividing out $\log[c+r]^2$, we  obtain
 \[ \mathcal{A}^\prime_4 = -12 + 20 \log[c + r]^2 + 14 \log[c + r]^3 + 3 \log[c + r]^4 .\]
   
Now, using the fact that $\log[c+r] > .87$ (as shown above), we have that
  \begin{eqnarray*}
    \mathcal{A}^\prime_4 & > & -12 + 20 (.87)^2 + 14 (.87)^3 + 3 (.87)^4 \\
    & = & 14.07573483 >0
   \end{eqnarray*}
\end{enumerate}

Since all these terms are non-negative, this implies that 
\[Numer[D] = \mathcal{A}_0 + \mathcal{A}_1 r + \mathcal{A}_2 r^2 + \mathcal{A}_3 r^3 + \mathcal{A}_4 r^4 \geq 0. \] Since the denominator of $D$ is also positive, this implies that $D>0$, and so this metric has non-negative anti-bisectional curvature.

\end{document}